\newcommand{\vol}{{\rm vol}}
\newcommand{\q}{\mathrm{mod}}
\newcommand{\A}{\mathcal{A}}
\newcommand{\eps}{\varepsilon}
\newcommand{\Ex}[1]{\mathbb{E} \left[\, #1\,\right]}
\newcommand{\Var}{\mathrm{Var}}
\newcommand{\Bcal}{\mathcal{B}}
\newcommand{\Ball}[2]{B_{#2} (#1)}
\newcommand{\E}[1]{\mathbb{E}\left(#1 \right)}
\newcommand{\PP}[2]{\mathrm{P}_{#1,#2}}
\newcommand{\PPna}{\mathrm{P}_{\alpha,\nu,n}}
\newcommand{\PPnan}{\mathrm{P}_{\alpha,\nu,n}^{(>\delta R)}}
\newcommand{\cP}{\mathcal{P}}
\newcommand{\V}{V_n}
\newcommand{\D}{\mathcal{D}_R}
\newcommand{\G}{\mathcal{G}}
\newcommand{\Gnan}{\mathcal{G}(n;\alpha,\nu)}
\newcommand{\Pnan}{\mathcal{P}(n;\alpha,\nu)}
\newcommand{\PBnan}[1]{\mathcal{P}_{#1} (n;\alpha,\nu)}
\newcommand{\Bnan}[1]{\mathcal{B}_{#1}(n;\alpha,\nu)}
\newcommand{\Po}[1]{\mathrm{Po} \left(#1\right)}
\newtheorem{theorem}{Theorem}  
\newtheorem{lemma}[theorem]{Lemma}
\newtheorem{claim}[theorem]{Claim}
\numberwithin{theorem}{section}
\numberwithin{equation}{section}
\title{The modularity of random graphs on the hyperbolic plane
\footnote{ \small 2010 \emph{Mathematics Subject Classification}: Primary: 05C80 Secondary: 05C12, 05C82.
\small \emph{Keywords}: random geometric graphs, hyperbolic plane, complex networks, modularity.}
}
 \author{
 Jordan Chellig\footnote{School of Mathematics, University of Birmingham, United Kingdom, e-mail: \texttt{JAC555@bham.ac.uk}}
 \qquad
Nikolaos Fountoulakis\footnote{School of Mathematics, University of Birmingham, United Kingdom, e-mail: \texttt{n.fountoulakis@bham.ac.uk}}
\qquad
Fiona Skerman\footnote{Department of Mathematics, Uppsala University
e-mail: \texttt{fiona.skerman@math.uu.se}}
}
\date{\today}
\begin{document}

\maketitle

\begin{abstract} 
Modularity is a quantity which has been introduced in the context of complex networks in order to quantify how close a network is to an ideal modular network in which the nodes form small interconnected communities that are joined together with relatively few edges. In this paper, we consider this quantity on a recent probabilistic model of complex networks introduced by Krioukov et al. (\emph{Phys. Rev. E} 2010). 

This model views a complex network as an expression of hidden hierarchies, encapsulated by an underlying hyperbolic space. For certain parameters, this model was proved to have typical features that are observed in complex networks such as power law degree distribution, bounded average degree, clustering coefficient 
that is asymptotically bounded away from zero, and ultra-small typical distances. 
In the present work, we  investigate its modularity and we show that, in this regime, it converges to 1 in probability. 
\end{abstract}

\section{Introduction}

M. Granovetter, in his pioneering analysis of social networks~\cite{ar:Gran73}, pointed out that a fundamental feature of social networks is the distinction between weak links and strong links. These reflect the intensity of interaction between two individuals, which may be dependent on measures such as the frequency of interaction. 
Granovetter pointed out that an individual is more likely to interact with other individuals through the strong links. 
This is expressed in terms of structural features of the social network, whereby individuals belong to 
 communities, tightly knit by strong links, and these communities are typically joined through weak links. 

These ideas postulate that a fundamental characteristic of social networks is the existence of communities or modules within such a network. These are mutually disjoint subsets of nodes/individuals which have high density, but are joined to other modules by few edges. 

Identifying such a partition in a given social network or any other complex network is computationally challenging. 
But before we set out to find algorithms that give even an approximate solution to this problem, one needs to 
quantify what is a good partition of the node set of a given network. Such a quantification was given by 
Newman and Girvan~\cite{NewmanGirvan} and is called the \emph{modularity score} of a given partition. 
The highest modularity score among all partitions is called the \emph{modularity} of a network (cf. Section~\ref{sec:mod} for the precise definition). The most popular algorithms used to cluster large network data use the modularity score as a quality function for partitions (see for example~\cite{ar:lancichinetti}).

In this paper, we investigate the modularity of a recent model of complex networks in which a network is sampled as a geometric random graph on the hyperbolic plane.

\subsection{The KPKBV model: a geometric framework for complex networks}
Krioukov et al.~\cite{ar:Krioukov} introduced a model of random geometric graphs on the hyperbolic plane as a model of complex networks, which we abbreviate as the \emph{KPKBV model} after its inventors. 
This is based on the assumption that the geometry of the hyperbolic plane  can accommodate the hidden hierarchy of a complex network and its intrinsic inhomogeneity. 
 Their basic assumption is that the hierarchies
that are present in a complex network induce a tree-like structure, and this suggests that 
there is an underlying geometry of a complex network which is the hyperbolic.

There are several representations of the standard hyperbolic plane $H^2_{-1}$ of curvature $-1$. 
In this paper, we shall use the
Poincar\'e unit disc representation, which is simply the open disc of radius one, that is,
$\{(u,v) \in \mathbb{R}^2 \ : \ u^2+v^2 < 1 \}$,
which is equipped with the hyperbolic metric: ${4}~\frac{du^2 + dv^2}{ (1-u^2-v^2)^2}$.
This is a standard formulation of the hyperbolic plane.


In particular, a suitable integration of the metric shows that
the length of a circle of (hyperbolic) radius $r$ (centered at the origin)  is
$2\pi~\sinh (r)$, whereas the area of this circle (centered at the origin) is
$2\pi (\cosh ( r) - 1)$. Hence, a fundamental difference with the Euclidean plane is that
volumes grow exponentially.

The KPKBV model introduced by Krioukov et al.~\cite{ar:Krioukov} yields a random geometric graph on 
$H^2_{-1}$.
Consider the Poincar\'e disc representation of the hyperbolic plane $H^{2}_{-1}$.
The random graph will have $n$ vertices and this is the parameter we take asymptotics with respect to.

Let $\nu >0$ be a fixed constant and let $R=R(n)>0$ satisfy $n= \nu e^{R /2}$.
(It turns out that the parameter $\nu$ determines the average degree of the random graph.)
Consider the disc $\D$ of hyperbolic radius $R$ centered at the origin of the Poincar\'e disc (that is, the set of points of the Poincar\'e disc at hyperbolic distance at most $R$ from its origin).

We take a random set of points of size $n$ that are the outcomes of the $i.i.d.$ random variables $v_1,\ldots , v_n$ taking values on $\D$. 
(We will be referring to the random variables $v_i$ as vertices, meaning their values on $\D$.)
More specifically,  assume that $v_1$ has \emph{polar} coordinates $(r, \theta)$. The angle $\theta$ is uniformly distributed in $(0,2\pi]$ and the probability density function of
$r$, which we denote by $\rho_n (r)$, is determined by a parameter $\alpha >0$ and is equal to
\begin{equation} \label{eq:pdf}
 \rho(r) = \rho_n (r) = \begin{cases}
\alpha \frac{\sinh  (\alpha r )}{ \cosh (\alpha R ) - 1}, & \mbox{if $0\leq r \leq R$} \\
0, & \mbox{otherwise}
\end{cases}.
\end{equation}
The aforementioned formulae for the area and the length of a circle of a given radius imply that if we set
$\alpha =1$, the distribution described in~\eqref{eq:pdf} is the uniform distribution on $\D$ (under the hyperbolic metric).
For general $\alpha > 0$ Krioukov et al.~\cite{ar:Krioukov} called this the \emph{quasi-uniform} distribution on $\D$.
Let us remark that in fact  this is the uniform distribution on a disc of hyperbolic radius $R$ within $H^2_{-\alpha^2}$ (the hyperbolic plane that has curvature $-\alpha^2$).


Given the point process $\V = \{v_1,\ldots, v_n\}$ on $\D \subset H_{-1}^2$ and the fixed parameters $\alpha$ and $\nu$ we define the random graph $\G (n; \alpha, \nu)$  on the point-set of $\V$, where two distinct points
form an edge if and only if they are within (hyperbolic) distance $R$ from each other. Figure~\ref{fig:tube} shows the ball of radius $R$ around a point $p \in \D$, denoted by $B(p;R)$. 
Thus, any point/vertex of $\G (n;\alpha,\nu)$ that falls inside the shaded region becomes connected to $p$.

\begin{figure}[H]
\centering
\begin{tikzpicture}

\node (tikzPic) at (0,0) {
\includegraphics[scale=0.57]{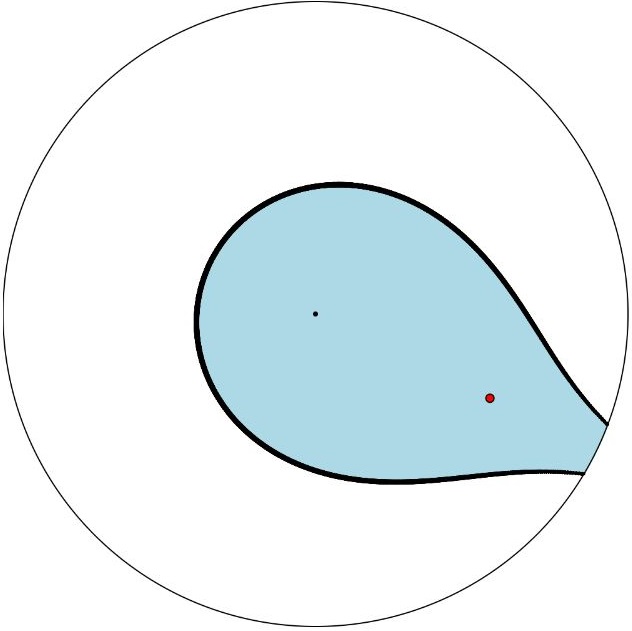}};

\node (Origin) at (0.204, 0.204) {$O$};
\node (Point) at (2.32, -0.77) {$p$};
\node (Ball indicator) at (-1.8, 1.12) {$B (p;R)$};
 \node (diskMarking) at ( -3, 2.93) {$\D$};

\end{tikzpicture}

\caption{The ball of radius $R$ centered at point $p,$ within $\D$ \label{fig:tube}}
    \label{fig:my_label}
\end{figure}

\subsection{Poissonisation of the KPKBV model}
In this paper, we will work on the Poissonisation of the above model. 
Recall that $\D$ was defined to be the disc of hyperbolic radius $R$ around the origin $O$ of the Poincar\'e disc representation of the hyperbolic plane of curvature $-1$.
Here, the vertex set is the point-set of a Poisson point process on $\D$ with intensity
$$n \frac{1}{2\pi}\rho_n (r) dr d\theta .$$
We denote it by $\PPna$. We also denote by $\kappa_{\alpha, \nu, n}$ the Borel measure on $\D$ given by
$$\kappa_{\alpha,\nu, n} (S) = \frac{1}{2\pi} \int_S \rho_n(r) dr d \theta, $$
for any Borel-measurable set $S$. Hence, the number of points that $\PPna$ has inside $S$ is distributed as
$\Po {n\cdot \kappa_{\alpha,\nu, n} (S)}$.
Moreover, the numbers of points in any finite collection of pairwise disjoint Borel-measurable subsets of $\D$ are
independent Poisson-distributed random variables.

We will define the random graph whose vertex set is the set of points of $\PPna$ in $\D$. 
As in $\Gnan$, two vertices/points of $\PPna$ are adjacent if and only if their hyperbolic distance is at most $R$.
We denote the resulting graph by $\Pnan$.


\subsection{The modularity of $\Pnan$} \label{sec:mod}

The notion of modularity was introduced by Newman and Girvan in~\cite{NewmanGirvan}.
For a graph $G=(V,E)$ with $m\geq 1$ edges, define the \emph{modularity score} associated with the 
partition $\A$ of the vertex set $V$ to be
\[ \q_{\A}(G) = \sum_{A \in \A } \left( \frac{e(A)}{m} - \left( \frac{\vol(A)}{2m} \right)^2 \right)
\]
where $e(A)$ denotes the number of edges within part $A$ and $\vol(A) = \sum_{v \in A} {\rm deg}(v)$ denotes the volume of $A$, that is, the sum of the degrees of the vertices in $A$. 

 For graphs $G$ without edges define $\q_{\A}(G)=0$. Note that the definition of modularity extends naturally to weighted graphs and is often used in the weighted form in applications. The term $e(A)$ becomes the sum of the weights of edges in $A$ and the degree of a vertex $\deg(v)$ is the sum of the weights of the edges incident to $v$.

This sum is effectively a comparison between the given network $G$ and a random network with the same degree sequence. The first term $\frac{1}{m} \sum_{A \in \A } e(A)$ is the probability that a randomly chosen edge of $G$ 
will lie inside one of the parts, whereas the term $\sum_{A \in \A} \left(\frac{\vol(A)}{2m} \right)^2$ represents 
the probability that a random edge lies in one of the parts in a uniformly random graph with the same 
degree distribution as $G$. 

On one extreme 
if there were no edges between the parts of $\A$, then 
$\frac{1}{m}\sum_{A \in \A } e(A) =1$. If $\A$ consists of a large number of parts that are comparable in volume, 
then the second term $\sum_{A \in \A}\left(\frac{\vol(A)}{2m}\right)^2$ is small. Hence, such a highly modular partition will have a 
modularity score close to 1. 

With $\cP(V)$ denoting the set of all partitions of $V$ 
the \emph{modularity of graph} $G$ is then 
$$\q(G) = \max \{ \q_{\A}(G) \ : \  \A \in \cP (V)\}.$$ 
 The set $\cP(V)$ includes the trivial partition $\{ V \}$ placing all vertices into the same part. Note that the modularity score of $\{ V\}$ is zero for any graph. Hence for any graph $0\leq \q(G) < 1$ with values near 1 taken to indicate a high level of community structure and values near 0 taken to indicate a lack of community structure. Newman~\cite{ar:Newm2006} determined the modularity of several examples of complex networks, not only social, finding them ranging between 0.3 and 0.8. Among these examples, higher modularity ($>0.7$) was found in the social network of co-authorship among scientists working on condensed matter. 

Brandes et al.~\cite{ar:Brandes2007} showed that finding the modularity of a given graph is NP-hard. {Further it was established by Dinh, Li and Thai that it is NP-hard to approximate modularity to within any constant factor~\cite{dinh2015network}. }
However, community detection in networks has been a central theme in network science. 
Newman~\cite{arNewm2006PNAS} used modularity to design a spectral algorithm for community detection in a given network. A popular algorithm, the Louvain method, is an iterative clustering technique uses the modularity function to compare candidate partitions~\cite{ar:BlondelFast}. 

For binomial random graphs from the $G(n,p)$ model, where on a set of $n$ vertices, each pair is included as an edge independently with probability $p$, there is a transition the typical behaviour of $\q (G(n,p))$ that is determined by $np$. In particular, the third author together with McDiarmid showed~\cite{ERgraphs} that when $np\leq 1+o(1)$, then $\q (G(n,p))$ is concentrated around 1, but when $np$ exceeds and is bounded away from 1, then it scales like $(np)^{-1/2}$.  They have also shown~\cite{REGgraphs} 
that for random $d$-regular graphs of bounded degree, it is bounded away from 0 and 1 with high probability and scales approximately like $1/\sqrt{d}$ when $d$ is large.  
Recently, Lichev and Mitsche~\cite{ar:LubMit2020} showed that for $d=3$ the modularity exceeds $2/3$ (confirming a conjecture of McDiarmid and Skerman) and is below $0.8$ with high probability. They further considered the modularity of random graphs having a given degree sequence with bounded maximum degree.

The main theorem of this paper is that with high probability the modularity of $\Pnan$ is close to 1. 
\begin{theorem} \label{thm:main} 
For any $\alpha > 1/2$ and $\nu >0$, we have 
$$ \q (\Pnan) \to 1,$$
as $n\to \infty$, in probability.
\end{theorem}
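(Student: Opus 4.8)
The plan is to exhibit, with probability tending to $1$, a single partition $\A$ of the vertex set of $\Pnan$ for which $\q_{\A}(\Pnan) = 1 - o(1)$; since $\q(G) < 1$ for every graph $G$, this forces $\q(\Pnan) \to 1$ in probability. The basic inequality we exploit is that, for any graph $G$ with $m \geq 1$ edges and any partition $\A$,
\[
\q_{\A}(G) \;=\; \sum_{A \in \A}\frac{e(A)}{m} \;-\; \sum_{A \in \A}\left(\frac{\vol(A)}{2m}\right)^{2} \;\geq\; 1 - \frac{e_{\mathrm{out}}(\A)}{m} - \frac{\max_{A \in \A}\vol(A)}{2m},
\]
where $e_{\mathrm{out}}(\A) = m - \sum_{A}e(A)$ counts edges between distinct parts and the last step uses $\sum_{A}\vol(A)^{2} \le (\max_{A}\vol(A))\cdot 2m$. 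Thus it suffices to build $\A$ with $e_{\mathrm{out}}(\A) = o(m)$ and $\max_{A}\vol(A) = o(m)$ with high probability. We will use freely the standard description of the model obtained from the intensity measure $\kappa_{\alpha,\nu,n}$: with high probability $\Pnan$ has $m = \Theta(n)$ edges; two vertices at radii $r_{1}, r_{2}$ whose angular coordinates differ by $\Delta\theta$ are adjacent exactly when $\Delta\theta \le \theta_{r_{1},r_{2}}$, where $\theta_{r_{1},r_{2}} = \pi$ if $r_{1} + r_{2} \le R$ and $\theta_{r_{1},r_{2}} = (1+o(1))\,2e^{(R-r_{1}-r_{2})/2} \le 2$ if $r_{1}+r_{2} > R$; a vertex at radius $r$ has expected degree $\Theta(e^{(R-r)/2})$; and with high probability every vertex has radius at least $(1 - \tfrac{1}{2\alpha})R - \omega(1)$, so that the maximum degree is $n^{1/(2\alpha)+o(1)} = o(n)$.

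We construct $\A$ as follows. Call a vertex \emph{central} if its radius is at most $R/2$ and \emph{peripheral} otherwise; note that any two central vertices are adjacent, so the central vertices span a clique, while a peripheral vertex has expected degree $O(e^{R/4}) = O(\sqrt{n})$. Fix a constant $\delta \in \bigl(0,\ \min\{\alpha - \tfrac12,\ \tfrac14\}\bigr)$ and set $K = \lceil n^{\delta}\rceil$; partition $(0,2\pi]$ into $K$ arcs $I_{1},\dots,I_{K}$ of equal length $2\pi/K$. Let $A_{0}$ be the set of central vertices, and for $1 \le j \le K$ let $A_{j}$ be the set of peripheral vertices with angular coordinate in $I_{j}$; put $\A = \{A_{0}, A_{1},\dots,A_{K}\}$.

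For the cut edges, first integrating $n\rho_{n}(r)\cdot\Theta(e^{(R-r)/2})$ over $r \le R/2$ gives $\E{\vol(A_{0})} = \Theta\bigl(n\,e^{-(\alpha - 1/2)R/2}\bigr) = \Theta(n^{3/2-\alpha}) = o(n)$ because $\alpha > 1/2$, and a routine concentration argument (the count and degrees of central vertices are well-behaved Poisson functionals) upgrades this to $\vol(A_{0}) = o(n)$ with high probability; in particular the number of edges meeting $A_{0}$ is at most $\vol(A_{0}) = o(m)$. An edge between two peripheral vertices in different arcs must cross one of the $K$ arc-endpoints $\phi$, and this requires both endpoints to lie within angular distance $\theta_{r_{1},r_{2}} \le 2$ of $\phi$ (here $r_{1},r_{2} > R/2$ forces $r_{1}+r_{2} > R$ and hence the small value of $\theta_{r_{1},r_{2}}$). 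Bounding the expected number of such edges across a fixed $\phi$ by the intensity measure yields
\[
n^{2}\int_{R/2}^{R}\!\!\int_{R/2}^{R}\rho_{n}(r_{1})\,\rho_{n}(r_{2})\,e^{R-r_{1}-r_{2}}\,dr_{1}\,dr_{2} \;=\; \Theta\bigl(\max\{1,\ n^{2(1-\alpha)}\}\bigr),
\]
up to an extra $(\log n)^{2}$ factor in the borderline case $\alpha = 1$. Summing over the $K$ arc-endpoints and applying Markov's inequality, with high probability this contributes $O\bigl(n^{\delta}\max\{1, n^{2(1-\alpha)}\}\bigr) = o(n)$ cut edges, since $\delta \le \alpha - \tfrac12 < 2\alpha - 1$. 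Hence $e_{\mathrm{out}}(\A) = o(m)$ with high probability.

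For the volume term, $A_{0}$ contributes $(\vol(A_{0})/2m)^{2} = o(1)$. For $j \ge 1$, all peripheral degrees are $O(\sqrt{n})$ with high probability (Poisson concentration and a union bound), so conditionally on the central vertices — whose contribution $\sum_{v \in A_{j}}(\#\,\text{central neighbours of }v) \le \vol(A_{0}) = o(n)$ is negligible — the quantity $\vol(A_{j})$ is a functional of the peripheral Poisson process whose value changes by $O(\sqrt{n})$ when a point is inserted. Rotational symmetry gives $\E{\vol(A_{j})} \sim 2m/K = \Theta(n^{1-\delta})$, and a Bernstein-type concentration inequality for Poisson functionals together with a union bound over the $K = n^{\delta}$ parts (affordable since $\delta < 1/4$) yields $\vol(A_{j}) = O(n^{1-\delta}) = o(n)$ simultaneously for all $j \ge 1$ with high probability. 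Therefore $\max_{A \in \A}\vol(A) = o(m)$ with high probability, and the displayed inequality gives $\q_{\A}(\Pnan) \ge 1 - o(1)$; as $\q(\Pnan) < 1$ always, this proves $\q(\Pnan) \to 1$ in probability. The main obstacle is the geometric analysis behind the cut-edge estimate — the precise adjacency rule in terms of radii and angular gap, and the careful evaluation of the double integral above near the threshold $r_{i} = R/2$ — together with the concentration inputs, both for $\vol(A_{0})$ and for the $\vol(A_{j})$, where the influence on the variance of the rare vertices of radius close to $R/2$ (degree up to $\Theta(\sqrt{n})$) must be controlled. The hypothesis $\alpha > 1/2$ enters decisively in three places: it makes the central volume $\Theta(n^{3/2-\alpha})$ sublinear, it leaves room for a growing number of parts $K = n^{\delta}$ with $\delta < 2\alpha - 1$, and it keeps the degree sequence light-tailed enough (power-law exponent $2\alpha + 1 > 2$) for these estimates to close.
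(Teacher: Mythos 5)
Your argument is correct in outline and reaches the theorem by a genuinely different route from ours, so let me compare the two. We first discard all vertices of defect radius exceeding a constant $y_\eps$ (Lemma~\ref{lem:cut-out}, which itself consumes the concentration and exponential-decay lemmas of Section~6), couple the remaining thin band with the process $\PP{\alpha}{\beta}$ on the box $\mathcal{B}(y_\eps)$, and partition it into a \emph{constant} number of strips; Lemma~\ref{lem:mod_partition} then forces us to prove \emph{two-sided} concentration of each part's volume around $2m/k$, together with an $O(1)$ bound on the cut edges per strip. You instead keep every vertex, isolate the centre $\{r\le R/2\}$ in a part of volume $O(n^{3/2-\alpha}\log n)=o(n)$, split the rest into $n^{\delta}$ sectors, and kill the degree tax via $\sum_A\vol(A)^2\le 2m\max_A\vol(A)$ --- so you need only a \emph{one-sided} $o(n)$ bound per part and no volume balancing, and you dispense entirely with the coupling and the cut-out lemma. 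The price is that your integrals run over the full radial range $r\in(R/2,R)$, where degrees reach $\Theta(\sqrt n)$: the cut-edge count per sector boundary is $\Theta(\max\{1,n^{2(1-\alpha)}\})$ and the variance of a sector's volume is $\Theta(n^{2-\alpha})$ for $\alpha<1$, so the hypothesis $\alpha>1/2$ is doing heavier lifting in your version (via $\delta<\alpha-\tfrac12$) than in ours, where the constant-width band makes every expected degree $O(1)$. Two places in your sketch need the care you already flag. First, the adjacency rule: the asymptotics $\theta_{r_1,r_2}=(1+o(1))2e^{(R-r_1-r_2)/2}$ and the numerical bound ``$\le 2$'' both fail at the corner $r_1=r_2=R/2$, where the threshold angle equals $\pi$; you only need the uniform upper bound $\theta_{r_1,r_2}\le C\,e^{(R-r_1-r_2)/2}$ for $r_1+r_2\ge R$, which is standard and costs only a constant in your double integral. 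Second, the ``Bernstein-type'' step for $\vol(A_j)$ is cleanest as a Campbell--Mecke second-moment computation exactly as in our Claim~\ref{clm:var_A} but over the annulus $r>R/2$ (after conditioning away the central points, as you do): the covariance of $\deg(p)$ and $\deg(p')$ is at most $n\kappa(B(p)\cap B(p'))$ plus an adjacency term, giving $\Var(\vol(A_j))=O(n^{2-\alpha}+n\log^2 n)$, and Chebyshev at deviation $n/\log n$ with a union bound over the $n^{\delta}$ parts closes since $\delta<\alpha-\tfrac12<\alpha$. With those two steps written out, your proof is complete and arguably more self-contained than ours, at the cost of heavier estimates in the bulk of the disc.
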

As we shall see in the next section, the parameters $\alpha$ and $\nu$ determine the average degree of 
$\Pnan$. In particular, for any given $\alpha > 1/2$, the average degree (of $\G(n;\alpha, \nu)$) turns out to be directly proportional to $\nu$. Unlike the $G(n,p)$ model, the modularity of $\Pnan$ approaches 1 as $n\to \infty$, without any dependence on the average degree or the existence of a giant component.  

\subsubsection*{Notation} We now introduce some notation which we use throughout out proofs.
If $\mathcal{E}_n$ is an event on the probability space
$(\Omega_n, \mathbb{P}_n, \mathcal{F}_n)$, for each $n \in \mathbb{N}$,
we say that $\mathcal{E}_n$ \emph{occurs asymptotically almost surely (a.a.s.)} if $\mathbb{P}_n (\mathcal{E}_n) \rightarrow 1$ as $n \rightarrow \infty$. 
In our context, we will be using the term \emph{a.a.s.} for the sequence of probability spaces of the random 
graphs $\Pnan$.

\section{Typical properties of the KPKBV model}

For $\alpha \in (1/2, \infty)$, Krioukov et al.~\cite{ar:Krioukov} show that the tails of the distribution of the degrees in $\G (n; \alpha, \nu)$  follow a power law with exponent $2\alpha  + 1$.  This was verified rigorously
in~\cite{ar:Gugel}.  
Thus, when $ \alpha \in (1/2, 1)$ the exponent is between 2 and 3.  
There has been experimental evidence that this is indeed the  case in a number of networks arising  in applications (the survey~\cite{BarAlb} contains a comprehensive a list of such examples).
Krioukov et al.~\cite{ar:Krioukov} also observe that the average degree of $\G (n; \alpha, \nu)$ 
is also tuned by the parameter $\nu$ for $\alpha \in (1/2, \infty)$. 
This was proved by Gugelman et al.~\cite{ar:Gugel}. They showed that
the average degree tends to  $8 \alpha^2 \nu / \pi(2\alpha-1)^2$ in probability.
 However, when $\alpha \in (0, 1/2]$, the average degree tends to infinity as $n\to \infty$.
 Thus, in this sense, the regime $\alpha \in (1/2,\infty)$ corresponds to the so-called 
 \emph{thermodynamic regime} in the context of random geometric graphs on the Euclidean plane~\cite{bk:Penrose}.

Gugelman et al.~\cite{ar:Gugel} also showed $\G (n;\alpha, \nu)$ has \emph{clustering coefficient} that is a.a.s. 
bounded away from 0. More precise results about the scaling of the local clustering coefficient in terms of the degrees
of the vertices were obtained by Stegehuis et al.~\cite{ar:CF_vdH2020}. More recently in~\cite{ar:CF2021}, convergence in probability of the clustering coefficient to an explicitly determined constant was derived.

When $\alpha$ is small, there are more points of $\PPna$ near the origin and one may expect increased graph connectivity. The paper
~\cite{BFMgiantEJC} establishes 
that $\alpha = 1$ is the critical point for the emergence of a giant component in $\G (n; \alpha, \nu)$. 
In particular, when $\alpha \in (0,1)$, the fraction of the vertices
contained in the largest component is bounded away from 0 a.a.s.~\cite{BFMgiantEJC}, whereas 
if $\alpha \in (1, \infty)$, the largest component is sublinear in $n$ a.a.s.
For $\alpha = 1$, the component structure depends on $\nu$. If $\nu$ is large enough,
then a giant component exists a.a.s., but if $\nu$ is small enough, then a.a.s. all components have sublinear 
size~\cite{BFMgiantEJC}. 

 The above results were strengthened in~\cite{ar:FM_AAP}. In that paper, it was shown that the fraction of vertices which belong to the largest component converges in probability to a certain constant which depends on $\alpha$ and $\nu$. More specifically, when $\alpha =1$, it turns out that there exists a critical value $\nu_0 \in (0, \infty)$ such that when $\nu$ crosses $\nu_0$ a giant component emerges a.a.s. 
 The papers \cite{KiwiMit} and \cite{KiwiMit2017+} consider the size of the second largest component. 
 Therein, it is shown that when $\alpha \in (0, 1)$ the second largest component has polylogarithmic order a.a.s.

The connectivity of $\G (n; \alpha, \nu)$ was considered by Bode et al. in~\cite{BFMconnRSA}. 
They show that for $\alpha <1/2$ the random graph $\G (n; \alpha, \nu)$ is a.a.s. connected, it is disconnected for $\alpha >1/2$~\cite{BFMconnRSA}.  When $\alpha =1/2$, it turns out that 
the probability of connectivity converges to a certain constant  which is given explicitly in~\cite{BFMconnRSA}.

The a.a.s. disconnectedness of $\G (n; \alpha, \nu)$ for $\alpha > 1/2$ follows easily from the a.a.s. existence of isolated vertices. Recent, asymptotic distributional properties of the number of isolated as well as the extreme points in $\Pnan$ were derived in~\cite{ar:NFJY2020}.  (A point is called extreme, when it is not connected to any other point of larger radius.)
The authors showed that the former satisfies a central limit theorem when $\alpha >1$, but it does not when $\alpha < 1$. However, the number of extreme points satisfies a central limit theorem for any $\alpha >1/2$. This is due to the fact that the number of isolated vertices is sensitive on the existence of a few vertices close to the centre of $\D$. Those a.a.s. appear when $1/2 < \alpha < 1$. 
On the other hand, extreme points have only local dependencies. 

Bounds on the diameter of $\G (n; \alpha, \nu)$ were derived in~\cite{KiwiMit} and~\cite{ar:FriedKrohmerDiam}. 
Therein, polylogarithmic upper bounds on the diameter are
shown. These were improved by M\"uller and Staps~\cite{ar:MullerDiam} who deduced 
a logarithmic upper bound on the diameter.
Furthermore, in~\cite{ar:AbdBodeFound} it is shown that for $\alpha \in (1/2,1)$ the largest component has doubly logarithmic typical distances and it forms what is called an \emph{ultra-small world}. 

\begin{figure}[H]
\minipage{0.32\textwidth}
    \includegraphics[scale = 0.26]{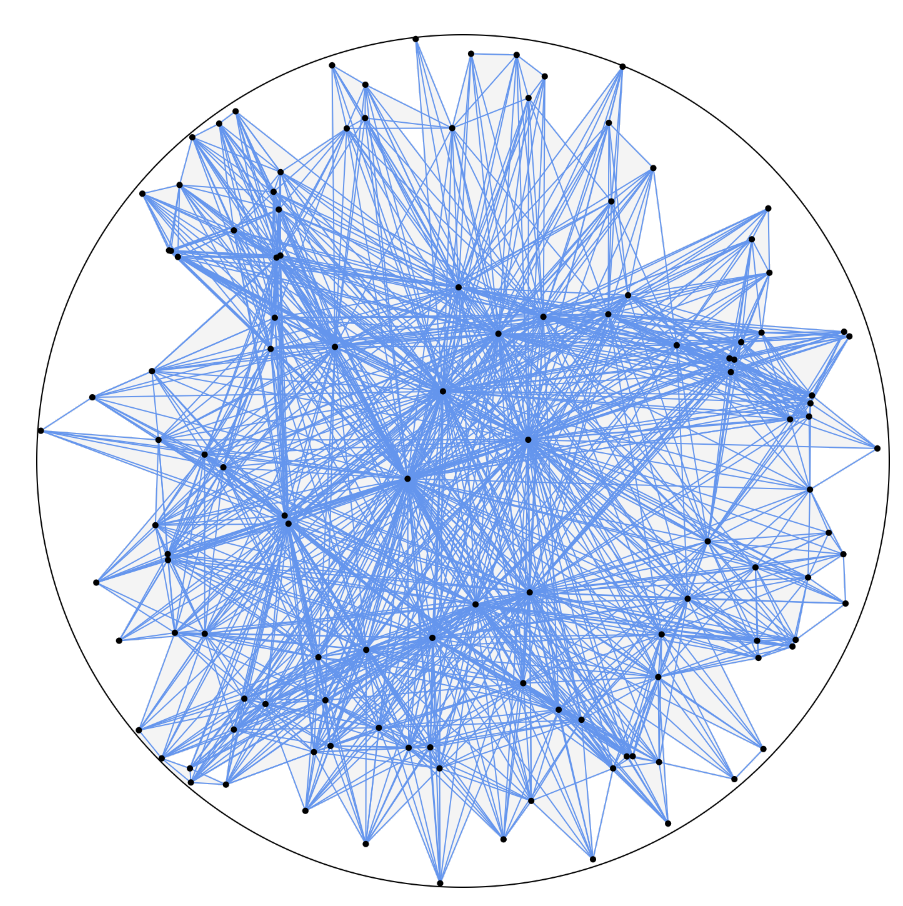}
    \endminipage \hfill
    \minipage{0.32\textwidth}\hspace{-0.3cm}
    \includegraphics[scale = 0.26]{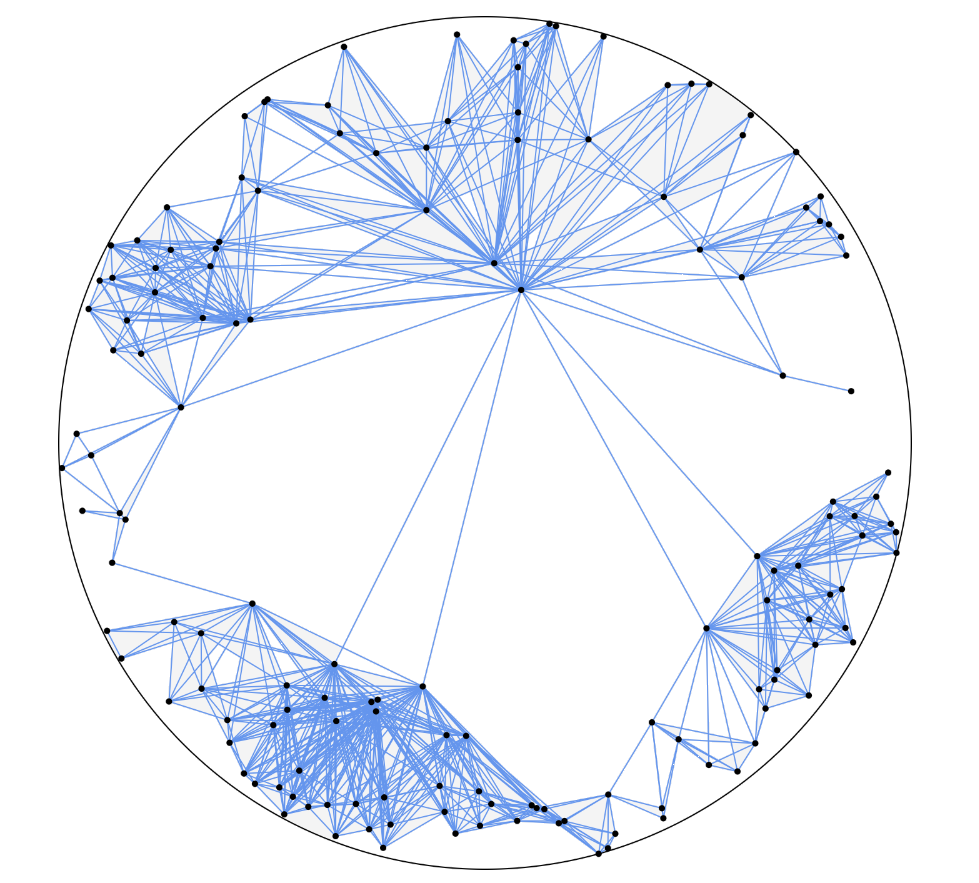}
    \endminipage \hfill
    \minipage{0.32\textwidth}
    \includegraphics[scale = 0.26]{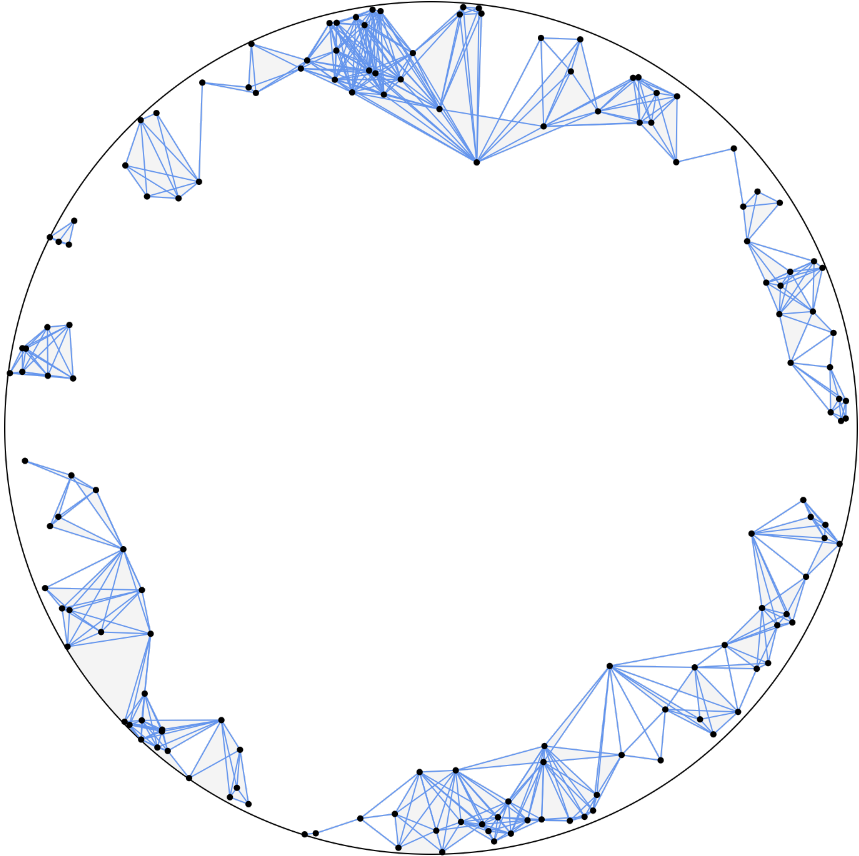}
    \endminipage 
    
    \caption{Three samples of $\G (n; \alpha, \nu)$ with a fixed $n = 150$, $\nu = 2$ and variable $\alpha.$ From left to right: $\alpha = 0.6,$ $\alpha = 1$ and $\alpha = 1.8$}
    
\end{figure}

\subsection{Approximating a ball around a point - geometric notation}

The main lemma in this section provides a useful (almost) characterization of two vertices being within
hyperbolic distance $R$, given their radii.
The lemma reduces a statement about hyperbolic distances to a statement about the relative angle
between two points.
Let us first introduce some notation. For a point $p \in \D$, we let $\theta (p) \in (-\pi, \pi]$ be the angle
$\hat{p O s}$ between $p$ and a (fixed) reference point  $s \in \D$ (moving from $s$ to $p$ in the
anti-clockwise direction). 
For $\theta, \theta' \in (-\pi,\pi]$, we set 
$$|\theta - \theta'|_\pi =  \min \{ |\theta - \theta' |, 2\pi - |\theta - \theta'| \} \in [0,\pi]. $$
For two points $p,p'\in \D$ we denote by $\theta (p,p') \in [0,\pi]$ their relative angle: 
$$\theta (p,p') = |\theta(p) - \theta (p')|_\pi.$$
Also, for $p \in \D$ we let $y(p)$ denote the \emph{defect radius} of $p$ in $\D$. In other words, if $r(p)$ is the radius
(the hyperbolic distance of $p$ from $O$), then $y(p) = R- r(p)$.
The following lemma gives a characterisation of what it is to have hyperbolic distance at most $R$ in terms
of the relative angle between two points. For $r,r'$ such that $r+r'> R$,  
let $\theta_R (r,r') \in (-\pi,\pi]$ be such that 
if two points $p,p'$ with $r(p)=r$ and $r(p')=r'$ have $\theta (p,p') = \theta_R (r,r')$ iff $d_H(p,p') = R$. 
Also, we set $T_R(y,y') = 2 \cdot e^{-R/2} e^{\frac12 (y+y')}$, for $y,y'\in [0,R]$. 

The following lemma is a consequence of Lemma 28 in~\cite{ar:FM_AAP}.
\begin{lemma}\label{lem:relAngle}
Let $\zeta \in (0,1)$. For any $\gamma > 0$ and any $n$ sufficiently large, 
uniformly for any $p, p'\in\D$ with $y(p)+ y(p') \leq \zeta R$ the following holds
$$ \left| \frac{\theta_R (r(p),r(p'))}{T_R (y(p),y(p'))}- 1 \right|< \gamma .$$
\end{lemma}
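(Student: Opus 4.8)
The plan is to work directly from the hyperbolic law of cosines and read off the asymptotics of $\theta_R$ by hand; this is essentially the content of Lemma 28 of~\cite{ar:FM_AAP}, so I would only sketch the reduction. Two points $p,p'\in\D$ with radii $r=r(p)$, $r'=r(p')$ and relative angle $\theta=\theta(p,p')$ satisfy $\cosh d_H(p,p') = \cosh r\cosh r' - \sinh r\sinh r'\cos\theta$. Setting $d_H(p,p')=R$ and solving for the critical angle gives
\[
\cos\theta_R(r,r') = \frac{\cosh r\cosh r'-\cosh R}{\sinh r\sinh r'},
\]
and using $\cosh(r-r')=\cosh r\cosh r'-\sinh r\sinh r'$ this becomes, after substituting $r=R-y$, $r'=R-y'$ (so $r-r'=y'-y$),
\[
2\sin^2\!\left(\tfrac{\theta_R}{2}\right) \;=\; 1-\cos\theta_R \;=\; \frac{\cosh R-\cosh(y-y')}{\sinh r\,\sinh r'}.
\]
When $y+y'\le\zeta R$ with $\zeta<1$ one has $|y-y'|\le\zeta R<R<2R-y-y'$, so the right-hand side lies in $[0,1]$ and $\theta_R$ is well defined in this range.

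Next I would estimate both sides uniformly. Since $|y-y'|\le\zeta R$, $\cosh(y-y')\le\tfrac12 e^{\zeta R}$ is negligible against $\cosh R=\tfrac12 e^R(1+e^{-2R})$, so the numerator equals $\tfrac12 e^{R}\bigl(1+O(e^{-(1-\zeta)R})\bigr)$. For the denominator, write $\sinh r\sinh r' = \tfrac14 e^{2R-y-y'} - \tfrac12\cosh(y-y') + \tfrac14 e^{-2R+y+y'}$; the last two terms are at most a constant times $e^{\zeta R}$ in absolute value, while the first is at least $\tfrac14 e^{(2-\zeta)R}$, and $2-\zeta>\zeta$ precisely because $\zeta<1$, so $\sinh r\sinh r' = \tfrac14 e^{2R-y-y'}\bigl(1+O(e^{-2(1-\zeta)R})\bigr)$. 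Dividing,
\[
\sin^2\!\left(\tfrac{\theta_R}{2}\right) \;=\; e^{-R+y+y'}\bigl(1+o(1)\bigr),
\]
hence $\sin(\theta_R/2)=e^{-R/2}e^{(y+y')/2}\bigl(1+o(1)\bigr)$ with the $o(1)$ uniform over $y+y'\le\zeta R$.

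Finally, since $y+y'\le\zeta R$ gives $e^{-R/2}e^{(y+y')/2}\le e^{-(1-\zeta)R/2}\to0$, we get $\theta_R/2\to0$ uniformly, so $\sin(\theta_R/2)=(\theta_R/2)(1+o(1))$; combining the two displays yields $\theta_R = 2e^{-R/2}e^{(y+y')/2}(1+o(1)) = T_R(y,y')(1+o(1))$ uniformly in the stated range. As $n\to\infty$ we have $R\to\infty$ (recall $n=\nu e^{R/2}$), so for every $\gamma>0$ all sufficiently large $n$ make $|\theta_R/T_R-1|<\gamma$ uniformly, which is the claim. The one delicate point is exactly the uniformity over the whole range $y(p)+y(p')\le\zeta R$, not merely for bounded $y,y'$: this is where $\zeta<1$ is essential, since it guarantees that the ``defect'' contributions of size $e^{\Theta(\zeta R)}$ remain dominated by the main terms of size $e^{\Theta(R)}$ in the numerator and $e^{\Theta((2-\zeta)R)}$ in the denominator.
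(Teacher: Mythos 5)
The paper does not actually prove Lemma~\ref{lem:relAngle}; it records the statement as a consequence of Lemma 28 in~\cite{ar:FM_AAP} and moves on. Your argument supplies the missing self-contained derivation, and it is correct: you set the hyperbolic distance equal to $R$ in the law of cosines, rewrite $1-\cos\theta_R$ as $(\cosh R-\cosh(y-y'))/(\sinh r\sinh r')$ via the identity $\cosh(r-r')=\cosh r\cosh r'-\sinh r\sinh r'$, estimate numerator and denominator using $|y-y'|\le y+y'\le\zeta R$ with $\zeta<1$ to control the error terms uniformly, and finish with the small-angle approximation since $e^{-R/2}e^{(y+y')/2}\le e^{-(1-\zeta)R/2}\to 0$. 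This is the standard route and matches what the cited lemma encapsulates. One cosmetic slip: the displayed quantity equals $2\sin^2(\theta_R/2)$, which lies in $[0,2]$ rather than $[0,1]$; this does not affect the conclusion that $\theta_R$ is well defined there, which you correctly justify by noting $|y-y'|<R<r+r'$.
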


For a point $p \in \D$, let $B(p;R)$ denote the set of points in $\D$ of hyperbolic distance at most $R$ from 
$p$. We further define 
$$ \check{B}_{\zeta,\gamma} (p) := \{ p' \in \D \ : \  y(p') +y(p)\leq \zeta R, \theta (p,p') < (1+\gamma ) 
T_R(y(p),y(p')) \}.$$ 
Let $\mathcal{A}_r := \D \setminus \mathcal{D}_r$ denote the annulus of the disc $\D$ which consists of all 
points of defect radius at most $R-r$.
The above lemma implies that for any $\zeta \in (0,1)$, $\gamma >0$ and any $n$ sufficiently large we 
have 
\begin{equation} \label{eq:ball_approx}
\check{B}_{\zeta,-\gamma} (p)  \subset B(p;R) \cap \A_{(1-\zeta) R+y(p)} \subset \check{B}_{\zeta, \gamma} (p);
\end{equation}
hence, the set $\check{B}_{\zeta, \gamma} (p)$ includes all points in $B(p;R)$ of defect radius at 
most $\zeta R - y(p)$. 
Further, the following holds and will be useful later on during our second moment calculations. 
\begin{claim}\label{clm:disjoint_balls}
If $\zeta \in (0,1)$ and $\gamma >0$, then for any $n$ sufficiently large
whenever $\theta (p,p') > 4(1 + \gamma) e^{-(1-\zeta) R/2}$ for points $p,p' \in \D$ with 
$y(p),y(p')<R/2$, we have  
$$ \left( B(p;R) \cap \A_{(1-\zeta) R+y(p)}\right) \cap \left(B_R (p') \cap \A_{(1-\zeta) R+y(p')} \right) = \varnothing.$$
\end{claim}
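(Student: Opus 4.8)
The plan is a short, entirely deterministic argument by contradiction, built on the ball approximation \eqref{eq:ball_approx} together with the triangle inequality for the angular distance $|\cdot|_\pi$. Fix $n$ large enough that \eqref{eq:ball_approx} holds for the given $\zeta$ and $\gamma$, and suppose towards a contradiction that the intersection
\[
\left( B(p;R) \cap \A_{(1-\zeta) R+y(p)}\right) \cap \left(B(p';R) \cap \A_{(1-\zeta) R+y(p')} \right)
\]
contains a point $q$. Since $q \in \A_{(1-\zeta)R+y(p)} = \D \setminus \mathcal{D}_{(1-\zeta)R+y(p)}$, its defect radius satisfies $y(q) \le R - \bigl((1-\zeta)R + y(p)\bigr) = \zeta R - y(p)$, i.e.\ $y(p) + y(q) \le \zeta R$; symmetrically $y(p') + y(q) \le \zeta R$. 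In particular $q$ lies in $B(p;R)$ with defect radius at most $\zeta R - y(p)$, and in $B(p';R)$ with defect radius at most $\zeta R - y(p')$, so the last clause of \eqref{eq:ball_approx} places $q$ in both $\check{B}_{\zeta,\gamma}(p)$ and $\check{B}_{\zeta,\gamma}(p')$.

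Next I would unpack the definition of $\check{B}_{\zeta,\gamma}$: membership of $q$ in these two sets gives
\[
\theta(p,q) < (1+\gamma)\,T_R\bigl(y(p),y(q)\bigr), \qquad \theta(p',q) < (1+\gamma)\,T_R\bigl(y(p'),y(q)\bigr).
\]
Since $T_R(y,y') = 2 e^{-R/2} e^{(y+y')/2}$ is increasing in $y+y'$, and both $y(p)+y(q)$ and $y(p')+y(q)$ are at most $\zeta R$, each $T_R$-term is at most $2 e^{-R/2} e^{\zeta R/2} = 2 e^{-(1-\zeta)R/2}$. Finally, $|\cdot|_\pi$ obeys the triangle inequality (it is the geodesic metric on the circle $\mathbb{R}/2\pi\mathbb{Z}$), so $\theta(p,p') \le \theta(p,q) + \theta(q,p')$, and combining the bounds above yields
\[
\theta(p,p') < (1+\gamma)\bigl(2 e^{-(1-\zeta)R/2} + 2 e^{-(1-\zeta)R/2}\bigr) = 4(1+\gamma) e^{-(1-\zeta)R/2},
\]
which contradicts the hypothesis $\theta(p,p') > 4(1+\gamma) e^{-(1-\zeta)R/2}$. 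Hence no such $q$ exists and the two sets are disjoint.

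I do not expect a genuine obstacle here; the whole thing is a two–line computation once the contradiction hypothesis is in place. The one point that needs care is the defect–radius bookkeeping: one must verify that $q$ lying in the relevant annulus really does force $y(p)+y(q)\le\zeta R$ (and $y(p')+y(q)\le\zeta R$), since this is exactly what licenses applying \eqref{eq:ball_approx} — equivalently Lemma~\ref{lem:relAngle} — to each of the pairs $(p,q)$ and $(p',q)$. The assumptions $y(p),y(p')<R/2$ serve only to keep both points comfortably inside the regime in which that approximation was established and play no further role in the argument.
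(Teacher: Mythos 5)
Your proof is correct and is exactly the intended argument: the paper states Claim~\ref{clm:disjoint_balls} without proof, treating it as an immediate consequence of~\eqref{eq:ball_approx}, and your contradiction argument (a common point $q$ lies in both $\check{B}_{\zeta,\gamma}(p)$ and $\check{B}_{\zeta,\gamma}(p')$, each forcing a relative angle below $2(1+\gamma)e^{-(1-\zeta)R/2}$, so the triangle inequality for $|\cdot|_\pi$ bounds $\theta(p,p')$ by $4(1+\gamma)e^{-(1-\zeta)R/2}$) fills that gap precisely. The defect-radius bookkeeping you flag is right: $q\in\A_{(1-\zeta)R+y(p)}$ means $y(q)\le \zeta R - y(p)$, which is exactly the membership condition in $\check{B}_{\zeta,\gamma}(p)$ and the hypothesis of Lemma~\ref{lem:relAngle}.
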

Another result, that will be useful later on, is the bound of the expected number of point of $\PPna$ inside 
$\check{B}_{\zeta, \gamma} (p)$. 
\begin{claim} \label{eq:check_ball_vol}
For any $\zeta \in (1/2,1)$ and $\gamma \in (-1,1)$, uniformly for any $p\in \D$ with $y(p)\leq R/2$ we have 
$$ \E{|\PPna \cap \check{B}_{\zeta, \gamma} (p)|} = \Theta (e^{y(p)/2}). $$
\end{claim}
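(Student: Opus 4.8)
The plan is to compute the $\kappa_{\alpha,\nu,n}$-measure of $\check{B}_{\zeta,\gamma}(p)$ directly and then use the fact that $|\PPna \cap S|$ is Poisson with mean $n\cdot\kappa_{\alpha,\nu,n}(S)$, so that the expectation equals $n\cdot\kappa_{\alpha,\nu,n}(\check{B}_{\zeta,\gamma}(p))$. Writing $y=y(p)$, the set $\check{B}_{\zeta,\gamma}(p)$ consists of all $p'$ with $y(p')\le \zeta R - y$ and relative angle at most $(1+\gamma)T_R(y,y(p'))=(1+\gamma)\cdot 2e^{-R/2}e^{(y+y(p'))/2}$. Parametrising by the defect radius $y'=y(p')\in[0,\zeta R-y]$ and integrating the angular coordinate first (the angular "width" of the region at defect radius $y'$ is, up to the factor $\tfrac{1}{2\pi}$ absorbed into $\kappa$, exactly $2\cdot(1+\gamma)\cdot 2e^{-R/2}e^{(y+y')/2}$), we get
\[
n\cdot\kappa_{\alpha,\nu,n}(\check{B}_{\zeta,\gamma}(p)) = n\int_0^{\zeta R - y} \frac{1}{2\pi}\cdot 2(1+\gamma)\cdot 2e^{-R/2}e^{(y+y')/2}\cdot \rho_n(R-y')\,dy'.
\]

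Next I would estimate $\rho_n(R-y')$. Since $\rho_n(r)=\alpha\sinh(\alpha r)/(\cosh(\alpha R)-1)$, for $r=R-y'$ with $y'$ in the relevant range we have $\rho_n(R-y') = \Theta(\alpha e^{\alpha(R-y')}/e^{\alpha R}) = \Theta(\alpha e^{-\alpha y'})$, uniformly. Plugging this in, and using $n=\nu e^{R/2}$ so that $n e^{-R/2}=\nu$ is a constant, the integral becomes
\[
\Theta(1)\cdot \nu\cdot (1+\gamma)\cdot e^{y/2}\int_0^{\zeta R - y} e^{y'/2}\cdot \alpha e^{-\alpha y'}\,dy' = \Theta(1)\cdot e^{y/2}\int_0^{\zeta R - y} e^{(1/2-\alpha)y'}\,dy'.
\]
Here the hypothesis $\zeta>1/2$ (combined with $\alpha>1/2$, which is a standing assumption of the paper) enters: we need the exponent $1/2-\alpha$ to make the integral converge to a constant as the upper limit grows. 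Since $\alpha>1/2$, we have $1/2-\alpha<0$, so $\int_0^{\zeta R-y} e^{(1/2-\alpha)y'}\,dy' = \frac{1}{\alpha-1/2}\bigl(1-e^{(1/2-\alpha)(\zeta R-y)}\bigr) = \Theta(1)$ uniformly over $y\le R/2$ (the upper limit $\zeta R - y$ is bounded below by $(\zeta-1/2)R\to\infty$, so the subtracted term is $o(1)$, and it is trivially bounded above). This yields $\E{|\PPna\cap\check{B}_{\zeta,\gamma}(p)|} = \Theta(e^{y/2})$, uniformly for $y\le R/2$, as claimed.

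The only genuine subtlety, and the place to be careful, is the passage from "relative angle at most $(1+\gamma)T_R(y,y')$" to an actual area computation: one must check that the angular condition carves out, for each fixed $y'$, an arc of total angular measure exactly $2(1+\gamma)T_R(y,y')$ (the factor $2$ since $\theta(p,p')$ is the unsigned angle in $[0,\pi]$), and that this is genuinely at most $\pi$ so the arc does not wrap around — this holds because $T_R(y,y')\le 2e^{-R/2}e^{\zeta R/2}=2e^{-(1-\zeta)R/2}\to 0$. After that the estimate is a routine one-variable integral, with the role of $\zeta\in(1/2,1)$ being solely to guarantee $\zeta>1/2$ so that, together with $\alpha>1/2$, the exponent $1/2-\alpha$ of the integrand is negative (equivalently, that the "bulk" of the mass comes from small $y'$ rather than from near the boundary $y'=\zeta R - y$). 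The uniformity over $p$ with $y(p)\le R/2$ is automatic since all the $\Theta(\cdot)$ bounds above are uniform in $y$.
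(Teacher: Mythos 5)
Your proof is correct and is essentially the paper's own argument: compute $n\cdot\kappa_{\alpha,\nu,n}(\check{B}_{\zeta,\gamma}(p))$ by integrating the angular width $\Theta(1)\cdot e^{-R/2}e^{(y(p)+y')/2}$ against $\rho_n(R-y')=\Theta(e^{-\alpha y'})$ over $y'\in[0,\zeta R-y(p)]$, and use $\alpha>1/2$ to make the resulting integral $\Theta(1)$ (the paper writes the same integral in the radius variable $\varrho=R-y'$). One small wording slip: the negativity of the exponent $1/2-\alpha$ comes only from $\alpha>1/2$; the hypothesis $\zeta>1/2$ is needed (as you correctly use earlier) only to ensure the upper limit $\zeta R-y(p)\geq(\zeta-1/2)R$ is bounded away from $0$, so that the integral is also bounded below.
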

\begin{proof} 
We calculate
\begin{eqnarray} 
\E{|\PPna \cap \check{B}_{\zeta, \gamma} (p)|}  &=& n \frac{1+\gamma}{2\pi}  \cdot 2
e^{-R/2 + y(p)/2} \cdot \int_{(1-\zeta) R + y(p)}^R 
  e^{(R-\varrho)/2}
\frac{\alpha \sinh (\alpha \varrho)}{\cosh(\alpha R)-1} d \varrho \nonumber \\
&=& \Theta (1) \cdot e^{y(p)/2} \int_{(1-\zeta) R + y(p)}^R e^{(1/2- \alpha) (R-\varrho)} d \varrho \nonumber \\
&=& \Theta (1) \cdot e^{y(p)/2} \int_0^{\zeta R - y(p)} e^{(1/2 -\alpha)y} d y \stackrel{\alpha >1/2}{=} \Theta (e^{y(p)/2}).  \nonumber 
\end{eqnarray}
\end{proof}
Furthermore, since $|\PPna \cap \check{B}_{\zeta, \gamma} (p)|$ follows the Poisson distribution, the above claim
also yields, that for any $\zeta \in (1/2,1)$ and $\gamma >0$,
\begin{equation}  \label{eq:check_ball_vol_sq}
\E {|\PPna \cap \check{B}_{\zeta, \gamma} (p)|^2} = O(e^{y(p)}),
\end{equation}
 uniformly for any $p\in \D$ with $y(p)\leq R/2$.

\subsubsection{Projecting $\D$ onto $\mathbb{R}^2$}

To simplify our calculations, we will transfer our analysis from $\D$ to $\mathbb{R}^2$. In particular, we
will make use of a mapping that was introduced in~\cite{ar:FM_AAP} and reduces our model to a percolation model on $\mathbb{R}^2$. This is achieved using a local approximation of the hyperbolic metric as given in Lemma~\ref{lem:relAngle}.
For a point $p \in \D$, let $(\theta(p), y(p)) \in (-\pi, \pi]\times [0,R]$ denote its angle with respect to a reference point and its defect radius, respectively. 

We define the map $\Phi : \D \to 
\mathcal{B}=(-\frac{\pi}{2}e^{R/2} , \frac{\pi}{2}e^{R/2}] \times [0,R]$,
mapping a point $p=(\theta (p), y(p)) \in \D$ to a point $(x(p),y(p)) \in \mathcal {B}$ 
$$\theta (p) \mapsto x (p):=\frac{1}{2} \theta (p) e^{R/2} \  \mbox{and}\  y(p) \mapsto y(p).$$   
For simplicity, we set
$I : = I (R):=\frac{\pi}{2}e^{R/2}$.

The map $\Phi$ projects the process $\PPna$ to a point process on
$\mathcal{B}$.  

We will approximate this process with the Poisson point process on $\mathcal{B}$
having intensity $$ \frac{2\nu}{\pi} \alpha e^{-\alpha y} dx dy. $$
For any measurable subset $S \subseteq \mathcal{B}$, we
set $\mu_{\alpha,\beta} (S) = \beta \int_S  e^{-\alpha y} dx dy$, with
$\beta = \frac{2\nu \alpha}{\pi}$. 
We denote this Poisson process by $\PP{\alpha}{\beta}$.

The analogue of the relative angle between points in $\D$ is defined as follows.
For $x, x' \in ( -I , I ]$, we let
$$|x-x'|_{\mathcal{B}}:= \min \left\{ |x-x'|, 2I-|x -x'| \right\}. $$

For a positive real number $y<R$, we set $\mathcal{B}(y) := (-\frac{\pi}{2}e^{R/2} , \frac{\pi}{2} e^{R/2}] \times [0,y]$; thus $\mathcal{B}(R) = \mathcal{B}$.
We define the random graph $\Bnan{y}$ with vertex set the point set of $\PP{\alpha}{\beta} \cap \mathcal{B}(y)$, and for any 
distinct $p,p'  \in \PP{\alpha}{\beta}$, the vertices $p,p'$ are adjacent if and only if 
$$|x(p)-x(p')|_{\mathcal{B}} < e^{(y(p)+y(p'))/2}.$$

We define the ball around a point $p\in \mathcal{B}(y)$ as 
$\Ball{p}{y} = \{ p' \in \mathcal{B}(y) \ :  \ |x(p) -x(p')|_{\mathcal{B}}  < e^{\frac12 (y(p) + y(p') )}\}$.
Thus, for a point $p \in \PP{\alpha}{\beta}$, the neighbourhood of $p$ in the random graph 
$\Bnan{y}$ is $\Ball{p}{y} \cap \PP{\alpha}{\beta} \setminus \{p\}$.  Figure~\ref{fig:plane} shows the neighbourhood around a point $p \in \mathcal{B}(y)$. Thus any point lying within the shaded region will be connected to $p.$ The rectangular region bounded by the axis and the dotted line represents a single box in our partition, see Section \ref{sec:modOfB}.

\begin{figure}[H]
\centering
\begin{tikzpicture}

\node (tikzPic) at (0,0) {
\includegraphics[scale = 0.39]{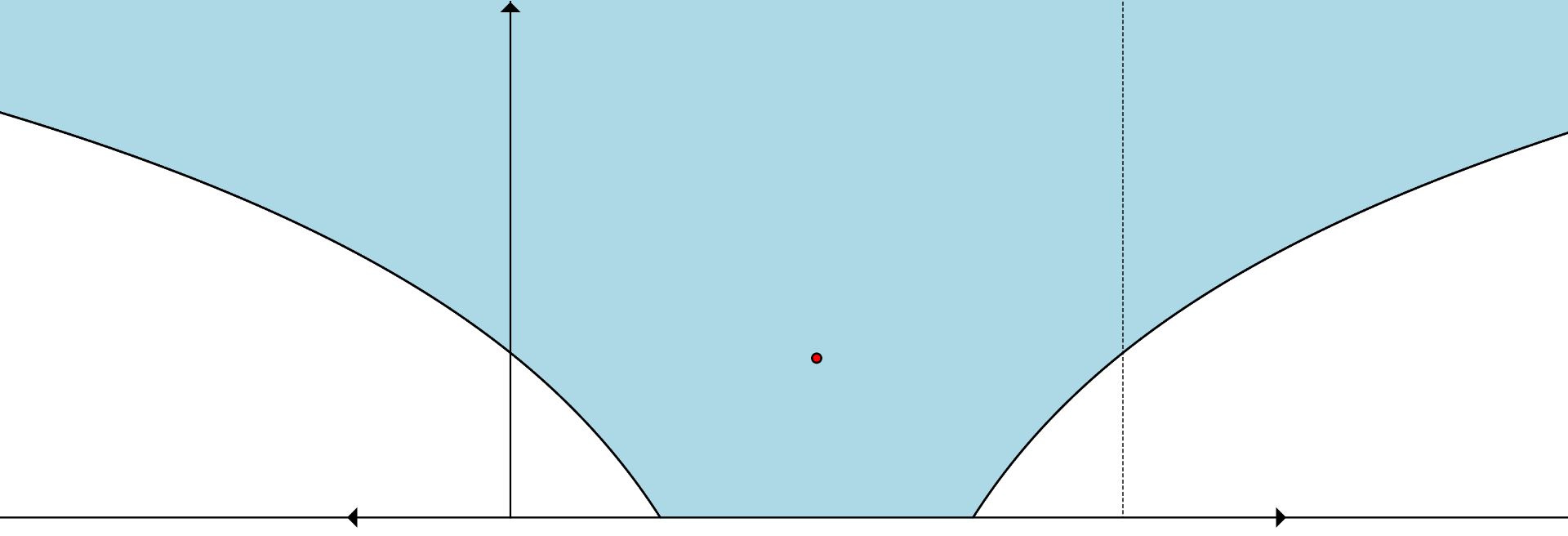}};

\node (Origin) at (-2.7, -2.8) {$O$};
\node (Point) at (0.65, -0.6) {$p$};
\node (Ball indicator) at (-4, 1.38) {$B_{y}(p)$};

\node (leftBranch) at (-6, -0.3) {

\scriptsize{${y = 2\log(x(p) - x) - y(p)}$}};

\node(rightBranch) at (6.3, -0.3){

\scriptsize{$ y = 2\log(x - x(p)) - y(p)$}
};

\node(boundingBox) at (3.5, -2.8) {$h$};

\end{tikzpicture}

\caption{The ball $B_{y}(p).$ \label{fig:plane}}

\end{figure}



\section{Mapping $\Pnan$ into $\mathcal{B}$ and the proof of Theorem~\ref{thm:main}}

To prove Theorem~\ref{thm:main}, it suffices to consider a subgraph of $\Pnan$ which contains most edges 
of it. 
To this end, we use Lemma 5.1 from \cite{ERgraphs}.
\begin{lemma} \label{lem:1st_approx}
Let $G = (V,E)$ be a graph with $|E|\geq 1$, let $E_0$ be a nonempty subset of $E$. 
For $E' = E \backslash E_0$, let $G' =(V,E')$. Then
\[
|\q(G) - \q(G')| < 2 |E_0|/|E|.
\]
\end{lemma}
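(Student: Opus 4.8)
\medskip
\noindent\textbf{Proof proposal.} Set $m=|E|$, $m_0=|E_0|\ge1$, $m'=m-m_0$ and $t:=m_0/m\in(0,1]$. If $E_0=E$ then $\q(G')=0$ by convention and $0\le\q(G)<1<2=2|E_0|/|E|$, so I will assume from now on that $m'\ge1$. Since $\cP(V)$ is finite the maxima defining $\q(G)$ and $\q(G')$ are attained, so it suffices to prove that
\[
\q_\A(G)-\q_\A(G')<\tfrac{2m_0}{m}\qquad\text{and}\qquad\q_\A(G')-\q_\A(G)<\tfrac{2m_0}{m}
\]
for \emph{every} partition $\A$ of $V$: applying the first bound with $\A$ optimal for $G$ and using $\q(G')\ge\q_\A(G')$ yields $\q(G)-\q(G')<2m_0/m$, and symmetrically (with $\A$ optimal for $G'$) one gets the reverse.

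The plan is to write $\q_\A(H)=c_\A(H)-d_\A(H)$, with the coverage $c_\A(H)=e(H)^{-1}\sum_{A\in\A}e_H(A)\in[0,1]$ and the degree tax $d_\A(H)=\sum_{A\in\A}\big(\vol_H(A)/2e(H)\big)^2\in(0,1]$, and to exploit a mixture structure. A uniformly random edge of $G$ lies in $E_0$ with probability $t$, and conditioned on the complement it is a uniformly random edge of $G'$; a uniformly random half-edge of $G$ belongs to an $E_0$-edge with probability $t$. Let $c_\A(E_0)\in[0,1]$ denote the fraction of edges of $E_0$ contained in a single part of $\A$, and let $\mathbf p,\mathbf p',\mathbf d$ denote the probability vectors on the parts of $\A$ given by the endpoint of a uniformly random half-edge of $G$, $G'$ and $E_0$ respectively, so that $d_\A(G)=\|\mathbf p\|_2^2$, $d_\A(G')=\|\mathbf p'\|_2^2$ and $d_\A(E_0)=\|\mathbf d\|_2^2$. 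Then $c_\A(G)=(1-t)c_\A(G')+t\,c_\A(E_0)$ and $\mathbf p=(1-t)\mathbf p'+t\mathbf d$, and expanding $\|\mathbf p\|_2^2$,
\[
d_\A(G)=(1-t)^2 d_\A(G')+2t(1-t)\langle\mathbf p',\mathbf d\rangle+t^2 d_\A(E_0).
\]
Since $\langle\mathbf p',\mathbf d\rangle\ge0$ and $2\langle\mathbf p',\mathbf d\rangle\le\|\mathbf p'\|_2^2+\|\mathbf d\|_2^2$, this gives $d_\A(G')-d_\A(G)<2t\,d_\A(G')$ (strictly, because $d_\A(E_0)>0$) and $d_\A(G)\le(1-t)d_\A(G')+t\,d_\A(E_0)$.

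The reverse inequality then drops out: using $c_\A(G)\ge(1-t)c_\A(G')$ and the second bound above,
\begin{align*}
\q_\A(G')-\q_\A(G) &=\big(c_\A(G')-c_\A(G)\big)+\big(d_\A(G)-d_\A(G')\big)\\
&\le t\big(c_\A(G')+d_\A(E_0)-d_\A(G')\big)\ \le\ t\big(2-d_\A(G')\big)\ <\ 2t.
\end{align*}
The forward inequality is the harder part. From $c_\A(G)\le(1-t)c_\A(G')+t$ one has $c_\A(G)-c_\A(G')\le t\big(1-c_\A(G')\big)$, and combining this with $d_\A(G')-d_\A(G)<2t\,d_\A(G')$ gives only
\[
\q_\A(G)-\q_\A(G')\ <\ t\big(1-c_\A(G')+2d_\A(G')\big),
\]
whose right-hand side could reach $3t$ unless one excludes the coincidence $d_\A(G')\approx1$ with $c_\A(G')\approx0$. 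That coincidence is impossible by the auxiliary inequality: for every graph $H$ with $e(H)\ge1$ and every partition $\A$,
\[
d_\A(H)\ \le\ \max_{A\in\A}\frac{\vol_H(A)}{2e(H)}\ \le\ \frac{1+c_\A(H)}{2}.
\]
Here the first inequality holds because $\sum_{A\in\A}\vol_H(A)/2e(H)=1$; for the second, choose $A^\ast$ attaining the maximum, note that every edge not contained in $A^\ast$ has an endpoint in $V\setminus A^\ast$, so at most $\vol_H(V\setminus A^\ast)=2e(H)-\vol_H(A^\ast)$ edges are not inside $A^\ast$, hence $e_H(A^\ast)\ge\vol_H(A^\ast)-e(H)$ and $c_\A(H)\ge\vol_H(A^\ast)/e(H)-1$. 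Applying this with $H=G'$ gives $2d_\A(G')\le1+c_\A(G')$, so $1-c_\A(G')+2d_\A(G')\le2$ and therefore $\q_\A(G)-\q_\A(G')<2t$, which completes the argument. The only genuinely non-routine step is the auxiliary inequality $d_\A(H)\le(1+c_\A(H))/2$, which quantifies the tension between a concentrated volume distribution and a low coverage; everything else is straightforward algebra around the two mixture identities and can be filled in mechanically.
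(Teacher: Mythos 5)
Your proof is correct. Note that the paper does not actually prove this lemma --- it imports it verbatim as Lemma 5.1 of \cite{ERgraphs} --- so there is no in-paper argument to compare against; judged on its own, your argument is complete. The structure (fix an arbitrary partition $\A$, bound $\q_{\A}(G)-\q_{\A}(G')$ and $\q_{\A}(G')-\q_{\A}(G)$ separately, then pass to the maxima) is the standard way such robustness statements are handled, and your two mixture identities $\q^E_{\A}(G)=(1-t)\q^E_{\A}(G')+t\,c_{\A}(E_0)$ and $\mathbf p=(1-t)\mathbf p'+t\mathbf d$ make the bookkeeping clean. I checked the delicate points: the strictness in $d_{\A}(G')-d_{\A}(G)<2t\,d_{\A}(G')$ does follow from $t>0$ and $d_{\A}(E_0)>0$ (which needs $E_0\neq\varnothing$, as hypothesised), and the degenerate case $E_0=E$ is correctly disposed of via the convention $\q(G')=0$. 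The one genuinely non-routine ingredient is your auxiliary inequality $\q^D_{\A}(H)\leq\tfrac12\bigl(1+\q^E_{\A}(H)\bigr)$, without which the forward direction would only give $3t$; your proof of it (every edge leaving the largest-volume part consumes a half-edge outside that part, so $e_H(A^\ast)\geq \vol_H(A^\ast)-e(H)$) is correct, and it is a nice quantitative form of the classical fact $\q_{\A}(H)\geq -1/2$. The only stylistic remark is that the lemma in \cite{ERgraphs} is stated and proved there for general edge subsets (covering both deletion and addition of edges); your argument is written for deletion only, which is all this paper uses.
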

We will show the following lemma. 
\begin{lemma} \label{lem:cut-out} For every $\eps >0$ there exists $y_{\eps}>0$ such that a.a.s.
$$\vol (\PPna \cap \mathcal{D}_{R-y_\eps})\leq \eps e (\Pnan). $$ 
\end{lemma}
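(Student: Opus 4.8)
The plan is to split the vertex set of $\Pnan$ at defect-radius threshold $y_\eps$ and bound separately the total degree (volume) contributed by the ``deep'' vertices — those in $\mathcal{D}_{R-y_\eps}$, i.e.\ with $r(p) \le R - y_\eps$, equivalently $y(p) \ge y_\eps$ — against a lower bound for $e(\Pnan)$ that holds a.a.s.

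First I would record that $e(\Pnan) = \Theta(n)$ a.a.s. Indeed, by the earlier discussion the average degree of $\G(n;\alpha,\nu)$ converges in probability to $8\alpha^2\nu/\pi(2\alpha-1)^2$, a positive constant, so $e(\Pnan) \ge c_0 n$ a.a.s.\ for some $c_0 = c_0(\alpha,\nu)>0$; alternatively one can obtain this directly by a first/second moment computation using the intensity $n\kappa_{\alpha,\nu,n}$ and the ball-volume estimates. So it suffices to show that for every $\eps>0$ there is $y_\eps$ with $\Ex{\vol(\PPna \cap \mathcal{D}_{R-y_\eps})} \le \tfrac12\eps c_0 n$ for large $n$, together with concentration of $\vol(\PPna \cap \mathcal{D}_{R-y_\eps})$ around its mean (Markov's inequality alone on the expectation suffices to get the a.a.s.\ statement, since $\vol$ is nonnegative).

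The heart is the expectation computation. By the Mecke/Campbell formula for the Poisson process $\PPna$,
$$ \Ex{\vol(\PPna \cap \mathcal{D}_{R-y_\eps})} = n \int_{\mathcal{D}_{R-y_\eps}} \Ex{\deg(p) \mid p \in \PPna}\, d\kappa_{\alpha,\nu,n}(p) = n \int_{\mathcal{D}_{R-y_\eps}} \big(n\, \kappa_{\alpha,\nu,n}(B(p;R))\big)\, d\kappa_{\alpha,\nu,n}(p), $$
since the expected degree of an inserted point $p$ is $n$ times the measure of its ball. Now I would use the ball approximation: $B(p;R)$ is essentially $\check B_{\zeta,\gamma}(p)$ plus the part of $B(p;R)$ of defect radius exceeding $\zeta R$, and for the latter one uses the crude bound that its $\kappa$-measure is $O(n^{-1} e^{-\alpha(1-\zeta)R + y(p)/2})$ or simply that the whole annulus $\mathcal{A}_{\zeta R}$ has small measure; the dominant contribution for a point of defect radius $y(p)$ is $\Theta(e^{y(p)/2})/n$ by Claim~\ref{eq:check_ball_vol}. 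Hence the expected degree of a point at defect radius $y$ is $\Theta(e^{y/2})$. Writing $y = y(p)$ and integrating against the radial density, the $\theta$-integral gives a factor $\Theta(1)$ and the radial part contributes, up to constants,
$$ n \int_{y_\eps}^{R} e^{y/2} \cdot e^{-\alpha y}\, dy, $$
coming from $\rho_n(r)\,dr \asymp \alpha e^{-\alpha(R-r)} dr$ with $y = R-r$. For $\alpha > 1/2$ this integral is dominated by its lower endpoint and equals $\Theta(e^{(1/2-\alpha)y_\eps})$, so the whole expression is $\Theta(n\, e^{-(\alpha-1/2)y_\eps})$. Choosing $y_\eps$ large enough (depending on $\eps$, $\alpha$, $\nu$, $c_0$) makes this at most $\tfrac12 \eps c_0 n$, and Markov gives $\Pro{\vol(\PPna \cap \mathcal{D}_{R-y_\eps}) > \eps c_0 n} \le 1/2$ — to get $o(1)$ rather than a constant, simply pick $y_\eps$ with an extra slack factor $\omega(1)$, say replace $\eps c_0/2$ by $\eps c_0 / g(n)$ is not allowed since $y_\eps$ must be constant, so instead note that for fixed $\eps$ one may take $y_\eps$ with $\Ex{\vol(\PPna\cap\mathcal D_{R-y_\eps})} \le \tfrac14\eps c_0 n$; combined with $e(\Pnan)\ge c_0 n$ a.a.s.\ and Markov on the volume, the bad event has probability $\to 0$.

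The main obstacle I anticipate is purely bookkeeping rather than conceptual: making the ``expected degree of a point at defect radius $y$ is $\Theta(e^{y/2})$'' uniform down to $y = y_\eps$ and up to $y$ close to $R$, and controlling the piece of $B(p;R)$ of large defect radius (near the centre of $\D$) that is not captured by $\check B_{\zeta,\gamma}(p)$. For the near-centre contribution one checks it is negligible because the measure $\kappa_{\alpha,\nu,n}$ of any fixed-angle sector near the origin decays like $e^{-\alpha R}$, and even summed over all such points it does not dominate. Everything else — the Mecke formula, the geometric series in $\alpha$, Markov's inequality — is routine, and Lemma~\ref{lem:1st_approx} is not needed for this lemma itself (it is the tool that will later let us discard these low-defect-radius edges), so the proof is self-contained given Claim~\ref{eq:check_ball_vol} and the a.a.s.\ linear lower bound on $e(\Pnan)$.
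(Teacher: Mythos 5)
Your expectation computation is essentially correct and mirrors the paper's Lemma~\ref{lem:expec_exp_decay}: splitting off the near-centre region (the paper's Claim~\ref{clm:empty_centre}), applying Campbell--Mecke, using the ball-measure estimate $\Theta(e^{y(p)/2})/n$, and integrating against $e^{-\alpha y}$ to get $\Ex{\vol(\PPna\cap\mathcal{D}_{R-y_\eps})}=\Theta\bigl(n\,e^{-(\alpha-1/2)y_\eps}\bigr)$ is exactly the right calculation. Your structural variant --- separately lower-bounding $e(\Pnan)=\Omega(n)$ a.a.s.\ rather than comparing $X_{y,R}$ to $X_{0,R}$ directly --- is a reasonable alternative.

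However, there is a genuine gap in the concentration step, and your own mid-paragraph correction attempt does not repair it. You claim ``Markov's inequality alone on the expectation suffices to get the a.a.s.\ statement,'' and then, when you notice this gives only a constant failure probability, you propose taking $y_\eps$ even larger so that $\Ex{\vol(\PPna\cap\mathcal D_{R-y_\eps})} \le \tfrac14\eps c_0 n$. But Markov then yields $\Pro{\vol > \eps c_0 n}\le 1/4$, still a fixed constant: for \emph{any} fixed $y_\eps$ the ratio $\Ex{\vol(\PPna\cap\mathcal{D}_{R-y_\eps})}/(\eps c_0 n)$ is a constant in $n$, so no choice of $y_\eps$ makes Markov alone deliver $o(1)$. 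To conclude you genuinely need upper concentration of $\vol(\PPna\cap\mathcal{D}_{R-y_\eps})$ about its mean (e.g.\ a variance bound and Chebyshev), which is precisely what the paper establishes in Lemma~\ref{lem:X_conc} via a Campbell--Mecke second-moment calculation --- and which is the technically heavy part of the paper's proof. That calculation in turn forces the paper to cut out $\mathcal{D}_{\delta R}$ (Claim~\ref{clm:empty_centre}) and to split the annulus at defect radius $\log R$ precisely in order to control the squared degrees of deep points; these are not bookkeeping details but the load-bearing part. Your proposal treats this as routine and supplies no second-moment bound, so as written the proof does not close.
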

For  a positive real number $r<R$, let $\PBnan{\leq y}$ denote the subgraph of $\Pnan$ induced by the points of $\PPna$ having defect radius at most  $y$. (The subgraph $\PBnan{> y}$ is defined analogously.)
As the number of edges incident to points in $\PPna \cap \mathcal{D}_{R-y_\eps}$ is at most $\vol (\PPna \cap \mathcal{D}_{R-y_\eps})$, 
the above two results imply that for every $\eps >0$ there exists $y_\eps>0$ such that a.a.s. 
$$ |\q (\Pnan)- \q (\PBnan{\leq y_\eps})| < 2 \eps. $$
Thereby, to prove Theorem~\ref{thm:main} it suffices to show that 
\begin{equation}\label{eq:partial_graph} 
\q (\PBnan{\leq y_\eps}) \to 1,
\end{equation}
as $n\to \infty$ in probability. 
To show this, we will couple the random graph $\PBnan{\leq y_\eps}$ with the random graph $\Bnan{y_\eps}$.
\begin{lemma}[Lemmas 27 and 30 in~\cite{ar:FM_AAP}] 
There is a coupling between the point processes $\PP{\alpha}{\beta}$ and $\PPna$ such that 
a.a.s. on the coupling space $\Phi (\PPna) = \PP{\alpha}{\beta}$. 
Furthermore, a.a.s. on the coupling space for any distinct $p,p' \in \PPna$ with $y(p),y(p')\leq R/4$
we have $d_H(p,p')\leq R$ if and only if $\Phi (p') \in \Ball{\Phi (p)}{R}$. 
\end{lemma}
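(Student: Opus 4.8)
The plan is to prove the two assertions in turn: first the coupling of the point processes $\Phi(\PPna)$ and $\PP{\alpha}{\beta}$, and then, on that coupling, the agreement of the hyperbolic adjacency with the $\mathcal{B}$-adjacency for pairs of defect radius at most $R/4$.

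For the coupling, I would first note that $\Phi$ is a bijection of $\D$ onto $\mathcal{B}$ --- it rescales the angular coordinate by $\tfrac12 e^{R/2}$ and leaves the defect radius untouched --- so $\Phi(\PPna)$ is automatically a Poisson point process on $\mathcal{B}$ whose intensity measure is the pushforward $\Phi_\ast\big(n\,\kappa_{\alpha,\nu,n}\big)$. Carrying out the change of variables $r=R-y$, $\theta=2xe^{-R/2}$, together with $n=\nu e^{R/2}$, rewrites this pushforward as a constant multiple of $\frac{\sinh(\alpha(R-y))}{\cosh(\alpha R)-1}$ (with respect to $dx\,dy$); and since $\frac{\sinh(\alpha(R-y))}{\cosh(\alpha R)-1}=e^{-\alpha y}\big(1+O(e^{-\alpha(R-y)})\big)$, this intensity coincides with that of $\mu_{\alpha,\beta}$ up to a relative error that is exponentially small in $R$ once $y$ is bounded away from $R$, the strip $\{y\approx R\}$ near the centre of $\D$ carrying only a vanishing fraction of the mass. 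With the two intensity measures this close, I would couple the corresponding Poisson processes by the standard superposition device: decompose each intensity as a common part --- the pointwise minimum of the two densities --- plus a nonnegative surplus, realise one Poisson process of the common intensity and two independent ones of the surpluses, and superpose. The resulting pair has the right marginals and disagrees only if a surplus point appears; the expected number of surplus points is $\int_{\mathcal{B}}\big|\,d\Phi_\ast(n\kappa_{\alpha,\nu,n})-d\mu_{\alpha,\beta}\,\big|$, and a direct estimate --- splitting off the strip $\{y\approx R\}$ --- shows this is $o(1)$ exactly when $\alpha>1/2$. Transporting the decomposition back through $\Phi^{-1}$ on the $\PPna$ side then gives a coupling under which $\Phi(\PPna)=\PP{\alpha}{\beta}$ a.a.s.

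For the adjacency statement, I fix this coupling and take $p,p'\in\PPna$ with $y(p),y(p')\le R/4$, so $y(p)+y(p')\le R/2<\zeta R$ for, say, $\zeta=3/4$. Then Lemma~\ref{lem:relAngle} gives $d_H(p,p')\le R\iff \theta(p,p')\le\theta_R(r(p),r(p'))$, with $\theta_R(r(p),r(p'))$ within a factor $1\pm\gamma$ of $T_R(y(p),y(p'))$. Applying $\Phi$, and using that $|x(\Phi(p))-x(\Phi(p'))|_{\mathcal B}=\tfrac12 e^{R/2}\,\theta(p,p')$ (the modified norm $|\cdot|_{\mathcal B}$ precisely mirroring the wrap-around built into $\theta(\cdot,\cdot)$) while the adjacency threshold in $\mathcal{B}$ is $e^{(y(p)+y(p'))/2}=\tfrac12 e^{R/2}\,T_R(y(p),y(p'))$, the two adjacency relations agree on $\{p,p'\}$ unless $\theta(p,p')$ lies strictly between $\theta_R(r(p),r(p'))$ and $T_R(y(p),y(p'))$ --- a ``borderline'' pair. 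It then remains to show that a.a.s.\ no borderline pair with $y(p),y(p')\le R/4$ exists. For this I would use the quantitative form of the relative-angle estimate (as in Lemma~28 of~\cite{ar:FM_AAP}), $\theta_R(r,r')=T_R(y,y')\big(1+O(e^{y+y'-R})\big)$, so that for given defect radii $y,y'$ the borderline set of relative angles has width $O(e^{-R}e^{y+y'})$; a first-moment computation via Mecke's formula against the intensity of $\PPna$ (the ball-volume estimate of Claim~\ref{eq:check_ball_vol} controlling the neighbourhood sizes that appear) then bounds the expected number of borderline low-defect pairs by a constant times $e^{R/2}\int_0^{R/4}\!\!\int_0^{R/4}e^{-\alpha(y+y')}\,e^{y+y'-R}\,e^{(y+y')/2}\,dy\,dy' = e^{-R/2}\Big(\int_0^{R/4}e^{(3/2-\alpha)y}\,dy\Big)^2$, which is $o(1)$ whenever $\alpha>1/2$. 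On the complementary a.a.s.\ event every relevant pair is non-borderline, so $d_H(p,p')\le R\iff\Phi(p')\in\Ball{\Phi(p)}{R}$ for all such $p,p'$; combined with the first part, this is the lemma.

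I expect the no-borderline-pair step to be the main obstacle. The form of Lemma~\ref{lem:relAngle} quoted in the excerpt only controls $\theta_R/T_R$ up to a \emph{fixed} window $1\pm\gamma$, and with fixed $\gamma$ the expected number of $\gamma$-borderline low-defect pairs is of order $\gamma n\to\infty$ --- so that form alone cannot give an exact match of the edge sets; one genuinely needs the exponentially small relative error $O(e^{y+y'-R})$, and $\alpha>1/2$ is precisely the condition making the controlling first moment vanish (the same exponent that governs boundedness of the average degree). The other ingredients --- the change-of-variables intensity computation and the total-variation estimate driving the point-process coupling --- are routine; the only points needing mild care are the contribution of the region $y\approx R$ (the neighbourhood of the origin of $\D$) and the bookkeeping of the angular wrap-around in $|\cdot|_{\mathcal{B}}$.
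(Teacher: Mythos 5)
The paper does not actually prove this lemma: it is imported wholesale as Lemmas 27 and 30 of~\cite{ar:FM_AAP}, so there is no internal proof to compare against. Your reconstruction is correct and is, in essence, the argument of that reference. The first half is the standard maximal (superposition) coupling of two Poisson processes on $\mathcal{B}$, driven by the total variation distance of the intensities; your change of variables gives pushforward density proportional to $\sinh(\alpha(R-y))/(\cosh(\alpha R)-1)=e^{-\alpha y}\bigl(1+O(e^{-2\alpha(R-y)})+O(e^{-\alpha R})\bigr)$, and integrating the discrepancy over $\mathcal{B}$ indeed yields $O(e^{-(\alpha-1/2)R})=o(1)$ precisely for $\alpha>1/2$. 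The second half correctly reduces the matching of edge sets to the absence of ``borderline'' pairs, and your self-diagnosis is exactly the crux: the fixed-$\gamma$ form of Lemma~\ref{lem:relAngle} quoted in this paper cannot give an exact match (it leaves $\Theta(\gamma n)$ ambiguous pairs), and one must invoke the quantitative estimate $\theta_R(r,r')=T_R(y,y')\bigl(1+O(e^{y+y'-R})\bigr)$ from Lemma 28 of~\cite{ar:FM_AAP}; your first-moment bound $e^{-R/2}\bigl(\int_0^{R/4}e^{(3/2-\alpha)y}\,dy\bigr)^2=o(1)$ for $\alpha>1/2$ is the right computation, and the restriction $y(p),y(p')\leq R/4$ is what keeps the pair inside the region where that estimate applies.

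One point your computation would surface, worth recording: with $\Phi$ as defined here (angular rescaling by $\tfrac12 e^{R/2}$), the pushforward of $n\kappa_{\alpha,\nu,n}$ has density $\tfrac{\nu\alpha}{\pi}e^{-\alpha y}(1+o(1))$, i.e.\ the correct normalisation is $\beta=\nu\alpha/\pi$ rather than $2\nu\alpha/\pi$ as in the paper's definition of $\PP{\alpha}{\beta}$; as literally written the two processes have expected total masses $n$ and $2n$ and could not agree a.a.s. This is a normalisation typo in the paper's setup (it does not affect any of the order-of-magnitude arguments that follow), not a gap in your argument.
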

The above lemma implies that there is a coupling between the processes  $\PP{\alpha}{\beta}$ and $\Phi(\PPna)$
on $\mathcal{B}$ such that for any fixed $y>0$ a.a.s., on this coupling space, the two point-sets coincide and moreover the random graph $\Bnan{y}$ is isomorphic to $\PBnan{\leq y}$.  

So we can deduce~\eqref{eq:partial_graph} from the following theorem.
\begin{theorem} \label{thm:mod_approx}
For any $\alpha > 1/2$, $\nu >0$ and any fixed $y>0$, we have 
$$ \q (\Bnan{y}) \to 1,$$
as $n\to \infty$ in probability. 
\end{theorem}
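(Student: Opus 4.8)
The plan is to exhibit, for each small $\eps>0$, an explicit ``box'' partition $\A_h$ of the vertex set of $\Bnan{y}$ whose modularity score is at least $1-\eps$ a.a.s.; since $\q(\Bnan{y})\le 1$ always, this gives the asserted convergence in probability. Recall $I=\tfrac{\pi}{2}e^{R/2}=\Theta(n)$, so $\mathcal{B}(y)$ is (after the identification defining $|\cdot|_{\mathcal{B}}$) a circle of circumference $2I=\Theta(n)$ carrying $\Theta(n)$ points of $\PP{\alpha}{\beta}$ in expectation. Fix a large constant $h>e^{y}$, cut the circle into $N=\lceil 2I/h\rceil$ arcs of essentially equal length, let $A_i$ consist of all points of $\PP{\alpha}{\beta}\cap\mathcal{B}(y)$ whose $x$-coordinate lies in the $i$-th arc, and set $\A_h=\{A_1,\dots,A_N\}$. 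The crucial geometric observation is that every point of $\mathcal{B}(y)$ has $y$-coordinate at most $y$, so every edge of $\Bnan{y}$ joins two points at $|\cdot|_{\mathcal{B}}$-distance strictly less than $e^{y}<h$; hence an edge can fail to lie inside a single part of $\A_h$ only if its endpoints straddle one of the $N$ arc-endpoints, and --- its span being shorter than a whole arc --- it straddles exactly one of them.

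First I would record the required moment estimates, all obtained by integrating against the explicit intensity $\beta e^{-\alpha y'}\,dx\,dy'$, $\beta=\tfrac{2\nu\alpha}{\pi}$. Writing $m=e(\Bnan{y})$: by the Mecke equation the $x$-integration contributes a factor $2I=\Theta(n)$ while the connection radii lie in $[1,e^{y}]$, so $\Ex{m}=c(\alpha,\nu,y)\,n\,(1+o(1))$ for an explicit constant $c>0$; a Poincar\'e-type bound for Poisson functionals (or a direct computation) gives $\Var(m)=O(n)$, hence $m=\Theta(n)$ a.a.s.\ by Chebyshev. Likewise the expected number of edges crossing any one fixed arc-endpoint is a constant $C(\alpha,\nu,y)$ independent of $n$, so the number $L$ of edges not contained in a single part of $\A_h$ satisfies $\Ex{L}\le CN=O(n/h)$ and $\Var(L)=O(n/h)$, whence $L=O(n/h)$ a.a.s.\ by Chebyshev. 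Finally, a point at height $y(p)$ has all its neighbours within $|\cdot|_{\mathcal{B}}$-distance $e^{y}$, so its degree is stochastically dominated by a Poisson variable of bounded mean; consequently each $A_i$ carries $\Theta(h)$ points of bounded expected degree and $\Ex{\sum_i\vol(A_i)^2}=O(hn)$, so $\sum_i\vol(A_i)^2\le n^{3/2}$ a.a.s.\ by Markov.

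On the a.a.s.\ event that these three estimates hold,
$$
\q_{\A_h}(\Bnan{y})=\frac{1}{m}\sum_i e(A_i)-\sum_i\Big(\frac{\vol(A_i)}{2m}\Big)^2=1-\frac{L}{m}-\frac{\sum_i\vol(A_i)^2}{4m^2}\ge 1-\frac{O(n/h)}{\Theta(n)}-\frac{n^{3/2}}{\Theta(n^2)}=1-O(1/h)-O(n^{-1/2}),
$$
with the $O(1/h)$ term independent of $n$. Thus, given $\eps>0$, fix $h=h(\eps)$ large enough that $O(1/h)<\eps/2$ and then, for all $n$ large, $O(n^{-1/2})<\eps/2$; so $\P{\q(\Bnan{y})\ge 1-\eps}\to 1$, and combined with $\q(\Bnan{y})\le 1$ this is precisely $\q(\Bnan{y})\to 1$ in probability.

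The step I expect to be the main obstacle is the concentration of $m$ and of the count $L$ of boundary-crossing edges: one must verify that these functionals of the Poisson process $\PP{\alpha}{\beta}$ do not deviate substantially from their means (of order $n$ and $n/h$ respectively), which requires an appropriate variance or Poisson-concentration estimate used together with the fact that $\Bnan{y}$ has bounded expected degrees. By contrast, the geometric input --- each edge crosses at most one box boundary because $h>e^{y}$ --- and the first-moment computations for $\Ex{m}$, for the per-boundary crossing count, and for $\Ex{\sum_i\vol(A_i)^2}$ are routine given the flat rectangular geometry of $\mathcal{B}(y)$ and the bounded range of the $y$-coordinate.
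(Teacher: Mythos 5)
Your proposal is correct, but it runs the strip-partition idea at the opposite scale from the paper, and the bookkeeping is genuinely different. The paper cuts $\mathcal{B}(y)$ into a \emph{constant} number $2t$ of strips of macroscopic width $hI=\Theta(n)$, $h=1/t$: there the edge-contribution loss is essentially free (only $O(1)$ expected edges leave each of the constantly many parts, so a union bound suffices), and the whole difficulty sits in the degree tax, which is $\approx 1/(2t)$ and is made small by taking $t$ large; this forces the volume-balance hypothesis of Lemma~\ref{lem:mod_partition}, hence per-part concentration $\vol(A_i)\approx 2m/(2t)$ via the Campbell--Mecke second-moment computation of Claim~\ref{clm:var_A} plus the handshake argument converting $\E{m}$ into $m$. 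You instead use $\Theta(n/h)$ strips of \emph{constant} width $h>e^{y}$, so the degree tax is automatically $o(1)$ (you only need the first-moment bound $\Ex{\sum_i\vol(A_i)^2}=O(hn)$ plus Markov, and no volume balance at all), and the burden shifts entirely to the edge contribution: the boundary-crossing count $L$ is now $\Theta(n/h)$ rather than $O(1)$ per part, so you genuinely need concentration of both $m$ and $L$, not just of $\E{m}$. The steps you flag as the obstacle are exactly the Campbell--Mecke variance computations the paper performs for $\vol(A_1)$; they transfer verbatim to $m$ and to $L$ because every interaction has range at most $e^{y}=O(1)$ in the $x$-coordinate (for $L$, take $h>2e^{y}$ so the crossing counts at distinct boundaries are independent and $\Var(L)=O(N)$). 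Net comparison: the paper's route factors through a reusable deterministic lemma and needs only per-part estimates but pays with the volume-balance argument; your route avoids volume balance entirely and yields the clean error $O(1/h)+O(n^{-1/2})$, at the price of global (but routine) variance bounds for $m$ and $L$.
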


\section{Some general properties of the modularity of a graph}
Let $G=(V,E)$ be a graph. For $A,B \subset V$, let $\bar{A}$ denote $V\setminus A$ and, for disjoint $A,B$, 
let $e(A,B)$ denote the number of edges with 
one endvertex in $A$ and the other in $B$.
It will sometimes be helpful to talk separately of the edge-contribution, also called \emph{coverage} 
$$\q^E_\A(G)=\frac{1}{m} \sum_{A\in\A} e(A)=1-\frac{1}{2m}\sum_{A \in \A} e(A, \bar{A}),$$ and 
the \emph{degree tax} 
$$\q^D_\A(G) =\frac{1}{(2m)^2}\sum_{A \in \A} \vol(A)^2.$$ 
The following lemma provides a lower bound on $\q_{\A}(G)$ with respect to the parameters of a given partition 
$\A$.
\begin{lemma} \label{lem:mod_partition}
Let $G$ be a graph with $m$ edges. Suppose the partition $\A=\{A_1, \ldots, A_k\}$ has the property that for each $1\leq i\leq k$,
\[
e(A_i, \bar{A}_i) \leq \varepsilon m \;\;\;\; \mbox{and} \;\;\;\; |\vol(A_i)  -2m/k| \leq 2m\delta 
\] 
then
\[
\q_{\A}(G) \geq 1 - \frac{k\varepsilon}{2} -\frac{1}{k} - k\delta^2. 
\] 
\end{lemma}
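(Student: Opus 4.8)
The plan is to use the decomposition $\q_\A(G)=\q^E_\A(G)-\q^D_\A(G)$ recorded just above the statement, bounding the coverage $\q^E_\A(G)$ from below and the degree tax $\q^D_\A(G)$ from above, each using one of the two hypotheses of the lemma.

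For the coverage, I would start from the identity $\q^E_\A(G)=1-\frac{1}{2m}\sum_{i=1}^{k}e(A_i,\bar{A}_i)$. Summing the hypothesis $e(A_i,\bar{A}_i)\le\eps m$ over $1\le i\le k$ gives $\sum_{i=1}^{k}e(A_i,\bar{A}_i)\le k\eps m$, and hence $\q^E_\A(G)\ge 1-\tfrac{k\eps}{2}$.

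For the degree tax, a termwise estimate $\vol(A_i)^2\le(2m/k+2m\delta)^2$ is too lossy: it produces a spurious additive $2\delta$. Instead I would exploit the global constraint $\sum_{i=1}^{k}\vol(A_i)=2m$. Setting $d_i:=\vol(A_i)-2m/k$, so that $\sum_{i=1}^{k}d_i=0$ and $|d_i|\le 2m\delta$, the cross term in the expansion of the square vanishes:
$$\sum_{i=1}^{k}\vol(A_i)^2=\frac{(2m)^2}{k}+2\cdot\frac{2m}{k}\sum_{i=1}^{k}d_i+\sum_{i=1}^{k}d_i^2=\frac{(2m)^2}{k}+\sum_{i=1}^{k}d_i^2\le\frac{(2m)^2}{k}+k(2m\delta)^2.$$
Dividing through by $(2m)^2$ gives $\q^D_\A(G)\le\tfrac{1}{k}+k\delta^2$. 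Combining with the coverage bound yields $\q_\A(G)=\q^E_\A(G)-\q^D_\A(G)\ge 1-\tfrac{k\eps}{2}-\tfrac{1}{k}-k\delta^2$, which is exactly the claimed inequality.

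The computation is entirely elementary, so there is no serious obstacle; the only step requiring a little care is the degree-tax bound, where one must avoid the naive termwise estimate and instead invoke $\sum_i\vol(A_i)=2m$ to cancel the linear-in-$\delta$ contribution and obtain the clean stated bound.
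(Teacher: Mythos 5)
Your proposal is correct and matches the paper's argument essentially line for line: the paper also bounds the coverage by $1-\tfrac{k\eps}{2}$ directly from the identity, and for the degree tax sets $\vol(A_i)=(1/k+\delta_i)2m$ (your $d_i=2m\delta_i$), uses $\sum_i\delta_i=0$ so the cross term vanishes, and concludes $\q^D_\A(G)\le \tfrac1k+k\delta^2$. No meaningful difference.
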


\begin{proof} Define $\delta_i$ to be such that $\vol(A_i) = (1/k + \delta_i)2m$ and note that $\sum_i \delta_i=0$ and by assumption $\forall i$ $|\delta_i| \leq \delta$. We may now bound the degree tax of $\A$,
\[
\q^D_{\A}(G) = \frac{1}{4m^2} \sum_{i=1}^k \vol(A_i)^2 = \sum_{i=1}^k \left(\frac{1}{k}+\delta_i\right)^2 \leq \frac{1}{k}+k\delta^2. 
\]
The edge contribution of $\A$ is $\q_A^E(G)= 1-\sum_i e(A_i, \bar{A}_i)/2m \geq 1 - k\eps /2$ and thus we have our required bound.
\end{proof}

\section{The modularity of $\Bnan{r}$.}\label{sec:modOfB}

In this section, we prove Theorem~\ref{thm:mod_approx}.
We shall make use of the following identity which is an application of the (multivariate)~\emph{Campbell-Mecke} formula (see for example Theorem 4.4~\cite{bk:LastPenrose}):
for a Poisson point process $\mathcal{P}$ on a measurable space $S$ with intensity $\rho$ and a measurable non-negative function $h: S^k \times \mathcal{N} \rightarrow \mathbb{R}$, where $\mathcal{N}$ is the set of all locally finite collections of points in $S$, we have
\begin{equation} \label{eq:Campbell-Mecke}
\begin{split}
& \mathbb{E}  \left(  \sum_{x_1,\ldots, x_k \in \mathcal{P}}^{\neq} h (x_1, \ldots, x_k,
\mathcal{P} \setminus \{x_1,\ldots, x_k\}) \right)\\
& = \int_S \cdots \int_S
\E{h(x_1,\ldots, x_r,\mathcal{P} \cup \{x_1,\ldots, x_k\})} \rho (x_1) \cdots \rho (x_k) dx_1 \cdots dx_k,
\end{split}
\end{equation}
where the sum ranges over all pairwise distinct $k$-tuples of points of $\mathcal{P}$.

Now, we are set to show that for any fixed $y>0$, we have 
$\q (\Bnan{y}) \to 1$ in probability as $n\to \infty$. 
To this end, we will use Lemma~\ref{lem:mod_partition} on a specific partition of the vertex set 
of $\Bnan{y}$. More specifically, we consider a partition of the box $\Bcal_{y}= (-I,I] \times [0,y]$ into $2t$
boxes $B_i:=(i\cdot hI, (i+1) \cdot h I] \times[0,y]$, for $i = - 1/h,\ldots, 1/h-1$, where $h = 1/t$ 
with $t \in \mathbb{N}$.   

Given this partition of the box $\Bcal_{y}$, we let $A_i = \PP{\alpha}{\beta}\cap B_i$, for $i= - t, \ldots, t-1$. 
With $\A = \{A_{-t},\ldots, A_{t-1} \}$, we will show that a.a.s.
\begin{equation} \label{eq:coverage_A} 
\q_{\A} (\Bnan{y})\geq 1 - 4h - o(1).
\end{equation}
Therefore, for $\eps >0$, take $t\in \mathbb{N}$ to be such that $4h =4/t < \eps/2$. So a.a.s. 
$$\q (\Bnan{y}) \geq 1 -\eps. $$

Let us now proceed with the proof of~\eqref{eq:coverage_A}. 
Firstly, note that since the random variables $\vol(A_i)$ are identically distributed, with $m$ denoting the number of edges of the random graph $\Bnan{y}$, we have
\begin{equation} \label{eq:exp_vols_Ai}
\E{\vol (A_i)} = \frac{1}{2t} \Ex{\vol (\PP{\alpha}{\beta} \cap \Bcal_{y} )} = \frac{\E{m}}{t}.
\end{equation}
We will use a second moment argument to show that a.a.s. for each $i = -t ,\ldots, t$, we have 
\begin{equation} \label{eq:vol_conc} 2m \left(\frac{1}{2t} - 3h^2\right) \leq  \vol(A_i) \leq 2m \left(\frac{1}{2t} + 3h^2\right). \end{equation}
Furthermore, we will show that following. 
\begin{claim} \label{clm:out-edges}
There exists a constant $C$ (depending on $y$) such that a.a.s. 
$$\E{e(A_1,\overline{A}_1)} < C. $$
\end{claim}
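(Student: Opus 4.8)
The goal is to bound the expected number of edges leaving a single box $A_1 = \PP{\alpha}{\beta} \cap B_1$, where $B_1 = (hI, 2hI] \times [0,y]$ has width $hI = \frac{h\pi}{2} e^{R/2}$ in the $x$-coordinate. Such an edge has one endpoint $p \in B_1$ and the other endpoint $p' \notin B_1$, and $p,p'$ are adjacent precisely when $|x(p)-x(p')|_{\mathcal{B}} < e^{(y(p)+y(p'))/2}$. Since $y(p), y(p') \le y$ and $y$ is fixed, the connection radius is at most $e^y$, which is a \emph{constant}. So an edge out of $A_1$ can only be formed by a point $p \in B_1$ lying within horizontal distance $e^y$ of one of the two vertical boundary lines $x = hI$ or $x = 2hI$ of $B_1$, paired with a point $p'$ on the other side of that line (also within horizontal distance $e^y$ of it, and with $y$-coordinates compatible).

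The plan is to apply the Campbell--Mecke formula~\eqref{eq:Campbell-Mecke} with $k=2$ to the function $h(p,p',\cdot) = \mathbf{1}[p \in B_1,\ p' \notin B_1,\ p\sim p']$, which gives
\[
\E{e(A_1,\overline{A}_1)} = \int_{B_1} \int_{\mathcal{B}\setminus B_1} \mathbf{1}\!\left[ |x(p)-x(p')|_{\mathcal{B}} < e^{(y(p)+y(p'))/2} \right] \beta e^{-\alpha y(p)}\, \beta e^{-\alpha y(p')}\, dp'\, dp.
\]
Split the outer integral over $p \in B_1$ according to which of the two boundary lines $p$ is near; by symmetry it suffices to handle the line $x = hI$ and double. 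Fix $p=(x,u) \in B_1$ with $hI < x \le hI + e^y$ (outside this strip the inner integral vanishes since $|x(p)-x(p')|_{\mathcal{B}} \ge e^y \ge e^{(y(p)+y(p'))/2}$ would fail — more carefully, $p'$ must have $x(p') \le hI$, so $|x(p)-x(p')| \ge x - hI$, forcing $x - hI < e^{(u+y(p'))/2} \le e^y$). For such $p$, the inner integral over $p' = (x',v)$ with $x' \le hI$ and $|x-x'| < e^{(u+v)/2}$ is at most $\int_0^y \beta e^{-\alpha v} \cdot e^{(u+v)/2}\, dv = O(e^{u/2})$, using $\alpha > 1/2$ to make the $v$-integral converge (bounded by a constant depending on $y$, or even uniformly). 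Then integrating over $p=(x,u) \in (hI, hI+e^y] \times [0,y]$ against $\beta e^{-\alpha u}$ gives $\int_0^y \beta e^{-\alpha u} \cdot e^y \cdot O(e^{u/2})\, du = O(1)$, again by $\alpha > 1/2$. Crucially, the $I$-dependence has completely cancelled: the box has width $\propto I$ but only an $O(1)$-width strip near each of its two boundaries contributes, and $I$ never reappears. Doubling for the two sides and combining the $p'$-side symmetry gives $\E{e(A_1,\overline{A}_1)} \le C(y)$ for a constant $C$ depending only on $y$ (and $\alpha,\nu$).

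I do not anticipate a serious obstacle here — the statement is almost a direct computation once one observes that the connection radius $e^y$ is constant while the box width is $\Theta(I) \to \infty$, so only constant-width boundary strips matter. The one point requiring a little care is the wrap-around term in $|\cdot|_{\mathcal{B}}$: the box $B_1$ is an ordinary interval not touching the wrap-around point $\pm I$ (that is only relevant for the box at the ``seam''), so $|x(p)-x(p')|_{\mathcal{B}} = |x(p)-x(p')|$ for the relevant pairs and no wrap-around contribution arises; for a box adjacent to the seam one would use the periodic identification, but the bound is identical. The other routine point is the use of $\alpha > 1/2$ twice to guarantee convergence of the exponential integrals in $v$ and in $u$ — this is exactly the same mechanism that appears in the proof of Claim~\ref{eq:check_ball_vol}.
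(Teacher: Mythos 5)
Your proof is correct and captures exactly the same key observation as the paper: since $y(p),y(p')\le y$ with $y$ fixed, the connection radius is at most the constant $e^y$, so only points in the two boundary strips of width $e^y$ (an $O(1)$ fraction independent of $I$) can contribute to $e(A_1,\overline{A}_1)$. The only organizational difference is that you apply the Mecke formula with $k=2$ and integrate directly over pairs, whereas the paper bounds $e(A_1,\overline{A}_1)\le\vol(A_1^{(1)})+\vol(A_1^{(2)})$ and then applies Campbell--Mecke with $k=1$ to the volume of each strip, using the previously established $O(1)$ bound on the expected degree of a point in $\mathcal{B}(y)$; these amount to the same computation. One small phrasing quibble: you invoke $\alpha>1/2$ to make the $v$- and $u$-integrals ``converge,'' but since both are over the bounded range $[0,y]$ they are automatically finite for any $\alpha$ (as you yourself note in parentheses) --- the hypothesis $\alpha>1/2$ is not actually needed at this step, though of course it is used elsewhere in the paper.
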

By the union bound, this implies that for all $i=-t,\ldots ,t-1$ 
$$e(A_i, \overline{A}_i) < \log n.$$
Since a.a.s. $m = \Omega (n)$, we can then 
deduce~\eqref{eq:coverage_A}, applying Lemma~\ref{lem:mod_partition} with $\eps = \log^2 n /n$, 
$\delta = 3h^2$, and $k=t = 1/h$.

We will deduce~\eqref{eq:vol_conc} from Chebyschev's inequality having shown that both the expectation and the variance of $\vol (A_i)$ are of order $n$. 
\begin{claim} \label{clm:var_A} 
We have 
$$\E{\vol (A_1)} = \Theta (n) \ \mbox{and} \  \Var (\vol(A_1) ) = O(n). $$
\end{claim}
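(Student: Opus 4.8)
The plan is to compute both quantities via the Campbell--Mecke formula \eqref{eq:Campbell-Mecke}. For the expectation, write $\vol(A_1) = \sum_{p \in A_1} \deg(p)$, where $\deg(p) = |\Ball{p}{y} \cap \PP{\alpha}{\beta}| - 1$, and note that $\vol(A_1) = 2 e(A_1) + e(A_1, \overline{A}_1)$. By \eqref{eq:exp_vols_Ai} it is equivalent (and cleaner) to estimate $\Ex{\vol(\PP{\alpha}{\beta} \cap \Bcal_y)} = \Ex{\sum_{p}^{\neq} \mathbf{1}[|x(p) - x(p')|_{\mathcal B} < e^{(y(p)+y(p'))/2}]}$ summed over ordered pairs, which by the bivariate Campbell--Mecke formula equals
\[
\beta^2 \int_{\Bcal_y} \int_{\Bcal_y} \mathbf{1}\!\left[|x - x'|_{\mathcal B} < e^{(u+u')/2}\right] e^{-\alpha u} e^{-\alpha u'}\, dx\, dx'\, du\, du'.
\]
The inner integral over $x'$ (for fixed $x, u, u'$) contributes $2 e^{(u+u')/2}$ as long as this is $\le I$, which holds for all $u,u' \le y$ once $n$ is large since $e^{y} = \Theta(1) = o(e^{R/2}) = o(I)$; the remaining integral over $x$ gives a factor $2I = \pi e^{R/2}$, and the integrals over $u, u' \in [0,y]$ give an absolute constant depending only on $\alpha$ and $y$. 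Hence $\Ex{\vol(\PP{\alpha}{\beta}\cap\Bcal_y)} = \Theta(e^{R/2}) = \Theta(n)$ (recall $n = \nu e^{R/2}$), so $\E{\vol(A_1)} = \Theta(n/t) = \Theta(n)$ for fixed $t$.

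For the variance, I would expand $\vol(A_1)^2 = \big(\sum_{p \in A_1} \deg(p)\big)^2$ and organize the contributing point-configurations by how many distinct points of $\PP{\alpha}{\beta}$ are involved: there are $\le 4$ points in the generic term $\deg(p)\deg(p')$ with $p,p'$ both in $A_1$. Applying Campbell--Mecke for $k = 1, 2, 3, 4$ separates $\Ex{\vol(A_1)^2}$ into a sum of integrals; the $k=4$ term (four distinct points, $p,p'\in A_1$, with $p$ adjacent to a third point and $p'$ to a fourth) factorizes and yields exactly $(\E{\vol(A_1)})^2 + O(\text{lower order})$ by the same one-variable estimate above applied twice, so it cancels against $(\E{\vol(A_1)})^2$ in the variance up to a correction that I must check is $O(n)$. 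The surviving terms are those where some points coincide: $k=3$ corresponds to $p,p'$ both in $A_1$ sharing a common neighbour, or one of them being a neighbour of the other; $k=2$ to $p,p'\in A_1$ adjacent with no extra neighbours counted, or $p=p'$ with two neighbours; $k=1$ to $p=p'$ with the neighbour coinciding too. I would bound each of these; the key point is that every such "collision" term has one fewer free $x$-integration than the full fourth-power term, hence is smaller by a factor $\Theta(e^{-R/2}) = \Theta(1/n)$ relative to $n^2$, i.e.\ is $O(n)$. The degree-moment bounds needed here are exactly of the type already established in Claim \ref{eq:check_ball_vol} and \eqref{eq:check_ball_vol_sq} (an expected ball-size of $\Theta(e^{y(p)/2})$ and second moment $O(e^{y(p)})$, which are uniformly $O(1)$ for $y(p) \le y$ fixed), so the integrals over the defect-radius coordinates converge.

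The main obstacle is the bookkeeping in the variance computation: correctly enumerating the $O(1)$ many configuration types when the square of a sum-of-degrees is expanded, making sure the leading $k=4$ contribution matches $(\E{\vol(A_1)})^2$ up to an $O(n)$ error (in particular handling the boundary effect that a neighbour of $p\in A_1$ need not lie in $A_1$, and the periodic wrap-around in $|\cdot|_{\mathcal B}$), and verifying that each collision reduces the order by the claimed factor $1/n$. Once the configurations are correctly set up, each individual integral is routine given Claim \ref{eq:check_ball_vol}. With $\E{\vol(A_1)} = \Theta(n)$ and $\Var(\vol(A_1)) = O(n)$ in hand, Chebyshev's inequality gives $\vol(A_i) = (1+o(1))\E{\vol(A_i)}$ a.a.s.\ for each fixed $i$, and combined with $\E{\vol(A_i)} = \E{m}/t$ from \eqref{eq:exp_vols_Ai} together with concentration of $m$ (which follows from the same first/second moment estimates applied to $m = e(\Bnan{y})$ itself), this yields \eqref{eq:vol_conc}.
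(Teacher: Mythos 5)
Your expectation calculation is essentially the same as the paper's: both apply Campbell--Mecke and reduce to one free $x$-integration of order $n$ multiplied by a finite integral over the defect-radius coordinates; the paper computes $\E{\vol(A_1)}$ directly while you compute $\E{\vol(\PP{\alpha}{\beta}\cap\Bcal_y)}$ and divide by $t$, which is the same thing.

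Your variance calculation, while correct in outline, takes a genuinely different route. The paper applies Campbell--Mecke with $k=2$ to the pair of base points $p,p'\in A_1$, arriving at a four-fold integral of $\E{\deg(p)\deg(p')}$, and then splits the $(x_0,x_0')$-domain according to whether the balls $\Ball{p}{y}$ and $\Ball{p'}{y}$ can overlap. On the non-overlap region Poisson independence factorizes the expectation and exactly reproduces $(\E{\vol(A_1)})^2$ after relaxing the domain back to the full square; on the $O(n)$-measure overlap region the integrand is bounded by a constant, giving the $O(n)$ error. You instead fully expand both degrees as indicator sums over a neighbour each, obtaining a $4$-tuple $(p,q,p',q')$, and apply Campbell--Mecke per coincidence pattern. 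This is the standard U-statistics decomposition for Poisson functionals. One thing you can sharpen: the all-distinct $k=4$ term equals $(\E{\vol(A_1)})^2$ \emph{exactly} --- both sides are the square of the same double integral --- so there is no lower-order correction to track there, and the variance is carried entirely by the collision configurations, each of which loses one free $x$-integration and hence a factor $\Theta(n)$, as you argue. Your decomposition also handles the diagonal $p=p'$ cleanly (it appears among the $k\le 3$ collisions), whereas the paper's $k=2$ application technically suppresses this diagonal and absorbs it silently; it is $O(n)$ so the conclusion is unaffected, but your bookkeeping is arguably tidier on that point. Two minor slips in your sketch: the $k=1$ case is vacuous since $q\ne p$ forces at least two distinct points, and the "$p=p'$ with the neighbour coinciding too" case is a $k=2$, not a $k=1$, configuration. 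Neither affects the argument.
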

Since the random variables $\vol (A_i)$ are identically distributed, the first part of the above claim together 
with ~\eqref{eq:exp_vols_Ai} imply that $\E{m} = \Theta (n)$ too.  Furthermore, Chebyschev's inequality implies that a.a.s. 
$$2 \E{m}\left( \frac{1}{2t} - h^2 \right) \leq  \vol (A_1) \leq 2 \E{m}\left( \frac{1}{2t} + h^2 \right). $$
In turn, the union bound implies that a.a.s. for all $i=-t, \ldots, t-1$, we have 
\begin{equation} \label{eq:vol_conc_i} 
2 \E{m} \left( \frac{1}{2t} - h^2 \right) \leq \vol (A_i) \leq 2 \E{m} \left( \frac{1}{2t} + h^2 \right). 
\end{equation}
Furthermore, a.a.s $m \geq \E{m} (1- h^3 )$. Indeed,  we have by Chebyshev's inequallity that for each $i = -t, \ldots, t-1$

$$\mathbb{P} \left[{\lvert \vol(A_i) - \E{\vol (A_i)} \rvert > h^3 \E{\vol(A_i)}} \right]  \leq \frac{\Var({\vol (A_i)})}{h^6 \E{\vol (A_i)}^2} \stackrel{Claim~\ref{clm:var_A}}{=} o(1). $$

Hence, by the union bound, we have that a.a.s for all $i = -t, \ldots, t-1$ that $\vol (A_i) \geq (1-h^3)\E{\vol (A_i)}.$ Therefore by the Handshaking Lemma, $\sum_{i=-t}^{t-1} \vol (A_i) = 2m$ whereby

$$2m = \sum_{ -t \leq i \leq t-1}\vol (A_i) \geq  \sum_{-t \leq i \leq t-1} (1-h^3)\E{\vol (A_i)} =2 \E{m} (1-h^3)$$
and 
$$2m = \sum_{ -t \leq i \leq t-1}\vol (A_i) \leq  \sum_{-t \leq i \leq t-1} (1+h^3)\E{\vol (A_i)} =2 \E{m} (1+h^3).$$ 

From the above, we deduce~\eqref{eq:vol_conc} since a.a.s. for all $i=-t,\ldots, t-1$  

\begin{equation*} \label{eq:vol_conc_actual} 
\vol (A_i) \leq \left( \frac{1}{2t} + h^2 \right) (1-h^3)^{-1} m \leq \left( \frac{1}{2t} + h^2 \right) (1+h^2) m  \leq 
\left( \frac{1}{2t} + 3h^2 \right) m,
\end{equation*}
provided that $t\geq 2$  (so that $h^2 >  h^3 + h^5$ which is equivalent to $1 > h + h^3$ and holds 
if $h \leq 1/2$), and 
\begin{equation*} \label{eq:vol_conc_actual_low} 
\vol (A_i) \geq \left( \frac{1}{2t} - h^2 \right) (1+h^3)^{-1} m \geq \left( \frac{1}{2t} - h^2 \right) (1-h^3) m  \geq 
\left( \frac{1}{2t} - 3h^2 \right) m.
\end{equation*}
. 
\begin{proof}[Proof of Claim~\ref{clm:out-edges}]
Firstly, let us point out that if a point $p\in A_1$ is far from the boundary of $B_1$, then it does not contribute to 
$e(A_1,\overline{A}_1)$. To quantify this, let us recall that for another  $p' \in 
\mathcal{B}(y)$, if $|x(p') - x(p)|_{\mathcal{B}} > e^{\frac12 (y(p)+y)}$, then $p' \not \in \Ball{p}{y}$. Since, $y(p)\leq y$ too, we can further conclude that for any point $p' \in \mathcal{B}(y)$, 
if $|x(p') - x(p)|_{\mathcal{B}} > e^y$, then $p' \not \in \Ball{p}{y}$. 

Hence, the only points $p\in A_1$ that may contribute to $e(A_1,\overline{A}_1)$ are such that 
$0\leq x(p)< e^{y}$ or $hI - e^{y} \leq x(p) < hI$. Let $A_1^{(1)}$ denote the set of the former and 
$A_1^{(2)}$ the set of the latter.  
Hence, 
\begin{eqnarray*} 
\E{e(A_1,\overline{A}_1)} \leq \E{\vol(A_1^{(1)})} + \E{\vol (A_1^{(2)})} = 2 \cdot \E {\vol (A_1^{(1)})}, 
\end{eqnarray*}
where the last equality holds since the random variables $\vol(A_1^{(1)})$ and $\vol(A_1^{(2)})$ are identically 
distributed. For a finite set of points $P$ and a point $p \in P$, 
we let $\deg (p;P) = | \Ball{p}{y} \cap P|$. 
Now, we apply the Campbell-Mecke formula~\eqref{eq:Campbell-Mecke} and get 
\begin{eqnarray*}
\lefteqn{\E{\vol(A_1^{(1)})} = \E {\sum_{p \in \PP{\alpha}{\beta} \cap A_1^{(1)}} \deg(p;\PP{\alpha}{\beta})}} \nonumber \\
&\stackrel{\eqref{eq:Campbell-Mecke}}{=}& 
\beta \cdot \int_0^{y}\int_0^{e^{y}} \E{\deg ((x_0,y_0)); \PP{\alpha}{\beta} 
\cup \{(x_0,y_0)\} } \cdot e^{-\alpha y_0}dx_0 dy_0. 
\end{eqnarray*}
But $\E{\deg ((x_0,y_0)); \PP{\alpha}{\beta} \cup \{(x_0,y_0)\} } = |\Ball{(x_0,y_0)}{y} \cap \PP{\alpha}{\beta}| 
= O(1)$, uniformly over 
all $x_0\in (0,e^{y}]$ and $y_0\in [0,y]$. 
So 
\begin{equation*}
\E{\vol(A_1^{(1)})} =O(1) \cdot \int_0^{y}\int_0^{e^{y}} e^{-\alpha y_0}dx_0 dy_0 = O(1).
\end{equation*}
\end{proof}

\begin{proof}[Proof of Claim~\ref{clm:var_A}] 
We will calculate $\E{\vol (A_1)}$ with the use of the Campbell-Mecke formula~\eqref{eq:Campbell-Mecke}: 
\begin{eqnarray}
\E{\vol (A_1)} &=& \E{ \sum_{p \in A_1 \cap \PP{\alpha}{\beta}} \deg (p; \PP{\alpha}{\beta} \cup \{p\}) }  \nonumber \\
&=&\beta \cdot \int_{0}^{hI} \int_0^y  \E{\deg ((x_0,y_0); \PP{\alpha}{\beta} \cup \{(x_0,y_0)\})} e^{-\alpha y_0}dy_0 dx_0 \nonumber \\ 
&=& \beta hI \cdot \int_0^y  \E{\deg ((0,y_0); \PP{\alpha}{\beta} \cup \{(0,y_0)\})} e^{-\alpha y_0}dy_0
\label{eq:vol_A1_first}
\end{eqnarray}
since $\PP{\alpha}{\beta}$ is homogeneous on the $x$-coordinate and $\deg ((x_0,y_0); \PP{\alpha}{\beta} \cup \{(x_0,y_0)\})$ are identically distributed with respect to $x_0$. 
Now, 
\begin{eqnarray}
\E{\deg ((0,y_0); \PP{\alpha}{\beta} \cup \{(0,y_0)\})}  &=& 2\beta \cdot \int_{0}^{y} e^{(y_0 + y_0')/2} e^{-\alpha y_0'} d y_0' \nonumber \\
&\stackrel{\alpha > 1/2}{=}& \frac{2\beta}{\alpha - 1/2} e^{y_0/2}\left( 1- e^{-y(\alpha - 1/2)} \right).\nonumber 
\end{eqnarray}
We substitute the integrand in~\eqref{eq:vol_A1_first} with the above expression and get
\begin{eqnarray} 
\E{\vol (A_1)} &=& hI  \frac{2\beta^2}{\alpha - 1/2} \left( 1- e^{-y(\alpha - 1/2)} \right)
\cdot \int_0^y e^{y_0/2 -\alpha y_0}dy_0 \nonumber \\
&=& 2hI  \left[\frac{\beta}{\alpha - 1/2} \left( 1- e^{-y(\alpha - 1/2)} \right)\right]^2 = \Theta (n). \nonumber
\end{eqnarray}

Now, we will calculate $\Var (\vol(A_1)) = \E{\vol(A_1)^2} - (\E{\vol(A_1)})^2$, again with the use of the Campbell-Mecke formula~\eqref{eq:Campbell-Mecke}. 
We write 
\begin{eqnarray}
&&\E{\vol(A_1)^2} = \E {\sum_{p,p' \in \PP{\alpha}{\beta} \cap B_1} \deg(p; \PP{\alpha}{\beta}) \cdot 
\deg(p';\PP{\alpha}{\beta})}\stackrel{\eqref{eq:Campbell-Mecke}}{=} \nonumber \\
&& 
\int_0^{y}\int_0^{h I} \int_0^{y} \int_0^{hI} \E{\deg ((x_0,y_0);\PP{\alpha}{\beta} 
\cup \{(x_0,y_0),(x_0',y_0')\}) \cdot \deg ((x_0',y_0'); \PP{\alpha}{\beta} 
\cup \{(x_0,y_0),(x_0',y_0')\}) } \times \nonumber \\
& & \hspace{2.5cm} e^{-\alpha y_0} e^{-\alpha y_0'} dx_0' dy_0' dx_0 dy_0. \label{eq:2ndmom_inter}
\end{eqnarray}
We will now argue that for the majority of the pairs of points $(x_0,y_0), (x_0',y_0') \in B_1$, the expectation that is
inside this integral factorises.
Suppose without loss of generality that $x_0 < x_0'$. In this case, 
$B_y((x_0,y_0)) \cap B_y((x_0',y_0')) = \varnothing$ if and only if $x_0' - x_0 > e^{(y_0'+y)/2} + e^{(y_0+y)/2}$. 
So, if this is the case, the random variables $\deg ((x_0,y_0);\PP{\alpha}{\beta} 
\cup \{(x_0,y_0),(x_0',y_0')\})$ and  $\deg ((x_0',y_0'); \PP{\alpha}{\beta} 
\cup \{(x_0,y_0),(x_0',y_0')\})$ are independent. 

For given $y_0,y_0'\in [0,y]$, we let 
$$S(y_0,y_0') = \{(x_0,x_0')\in (0,hI]\times (0,hI] \ : \ 0< x_0'-x_0 \leq e^{(y_0'+y)/2} + e^{(y_0+y)/2}\}.$$
With this definition, we split the quadruple integral in~\eqref{eq:2ndmom_inter} in the following way: 
\begin{eqnarray}
& &\int_0^{y}\int_0^{y} \int_{(0,hI]\times (0,hI]\setminus S(y_0,y_0')} 
\E{\deg ((x_0,y_0)) \cdot \deg ((x_0',y_0')); \PP{\alpha}{\beta} 
\cup \{(x_0,y_0),(x_0',y_0')\}}  \times \nonumber \\
& & \hspace{2.5cm} e^{-\alpha y_0} e^{-\alpha y_0'} d x_0 dx_0'  dy_0 dy_0' \nonumber \\
&+& \int_0^{y}\int_0^{y} \int_{S(y_0,y_0')} 
\E{\deg ((x_0,y_0)) \cdot \deg ((x_0',y_0')); \PP{\alpha}{\beta} 
\cup \{(x_0,y_0),(x_0',y_0')\}}  \times \nonumber \\
& & \hspace{2.5cm} e^{-\alpha y_0} e^{-\alpha y_0'} d x_0 dx_0'  dy_0 dy_0'. \label{eq:split}
\end{eqnarray}
If $(x_0,x_0') \in (0,hI]\times (0,hI]\setminus S(y_0,y_0')$, then 
the random variables $\deg ((x_0,y_0);\PP{\alpha}{\beta} \cup \{(x_0,y_0),(x_0',y_0') \})$ and $\deg ((x_0',y_0');\PP{\alpha}{\beta} \cup \{(x_0,y_0),(x_0',y_0') \})$ are independent. 
In the first integral, the integrand is 
\begin{eqnarray*} 
\lefteqn{\E{\deg ((x_0,y_0)) \cdot \deg ((x_0',y_0')); \PP{\alpha}{\beta} 
\cup \{(x_0,y_0),(x_0',y_0')\}}=}  \\
& & \E{\deg ((x_0,y_0)); \PP{\alpha}{\beta} \cup \{(x_0,y_0),(x_0',y_0')\}} \cdot 
\E{\deg ((x_0',y_0')); \PP{\alpha}{\beta}  \cup \{(x_0,y_0),(x_0',y_0')\}} \\
&=& \E{\deg ((x_0,y_0)); \PP{\alpha}{\beta} \cup \{(x_0,y_0)\}} \cdot 
\E{\deg ((x_0',y_0')); \PP{\alpha}{\beta}  \cup \{(x_0',y_0')\}}.
\end{eqnarray*}
Therefore, we can bound the first integral in~\eqref{eq:split} as follows: 
\begin{eqnarray*} 
& &\int_0^{y}\int_0^{y} \int_{(0,hI]\times (0,hI]\setminus S(y_0,y_0')} 
\E{\deg ((x_0,y_0)) \cdot \deg ((x_0',y_0')); \PP{\alpha}{\beta} 
\cup \{(x_0,y_0),(x_0',y_0')\}}  \times \nonumber \\
& & \hspace{2.5cm} e^{-\alpha y_0} e^{-\alpha y_0'} d x_0 dx_0'  dy_0 dy_0' \\
&=&\int_0^{y}\int_0^{y} \int_{(0,hI]\times (0,hI]\setminus S(y_0,y_0')} 
 \E{\deg ((x_0,y_0)); \PP{\alpha}{\beta} \cup \{(x_0,y_0)\}} \cdot 
\E{\deg ((x_0',y_0')); \PP{\alpha}{\beta}  \cup \{(x_0',y_0')\}}  \times \nonumber \\
& & \hspace{2.5cm} e^{-\alpha y_0} e^{-\alpha y_0'} d x_0 dx_0'  dy_0 dy_0' \nonumber \\
&\leq&\int_0^{y}\int_0^{y} \int_{(0,hI]\times (0,hI]} 
 \E{\deg ((x_0,y_0)); \PP{\alpha}{\beta} \cup \{(x_0,y_0)\}} \cdot 
\E{\deg ((x_0',y_0')); \PP{\alpha}{\beta}  \cup \{(x_0',y_0')\}}  \times \nonumber \\
& & \hspace{2.5cm} e^{-\alpha y_0} e^{-\alpha y_0'} d x_0 dx_0'  dy_0 dy_0' \nonumber \\
&=& 
\left(\int_0^{y} \int_0^{hI} 
 \E{\deg ((x_0,y_0)); \PP{\alpha}{\beta} \cup \{(x_0,y_0)\}} e^{-\alpha y_0} d x_0 dy_0   \right)^2.
\end{eqnarray*}
But by the Campbell-Mecke formula~\eqref{eq:Campbell-Mecke}, the latter is 
$$(\E{\vol(A_1)})^2 =\left(\int_0^{y} \int_0^{hI} 
 \E{\deg ((x_0,y_0)); \PP{\alpha}{\beta} \cup \{(x_0,y_0)\}} e^{-\alpha y_0} d x_0 dy_0   \right)^2.$$
 
 Now, let us consider the second integral in~\eqref{eq:split}. 
 In this case, note that uniformly for every $y_0,y_0' \in [0,y]$ and $(x_0,x_0') \in S(y_0,y_0')$, we have 
 $$\E{\deg ((x_0,y_0)) \cdot \deg ((x_0',y_0')); \PP{\alpha}{\beta} 
\cup \{(x_0,y_0),(x_0',y_0')\}} = O(1).$$ Therefore, 
\begin{eqnarray*}
&&\int_0^{y}\int_0^{y} \int_{S(y_0,y_0')} 
\E{\deg ((x_0,y_0)) \cdot \deg ((x_0',y_0')); \PP{\alpha}{\beta} 
\cup \{(x_0,y_0),(x_0',y_0')\}}  \times \nonumber \\
&& \hspace{2.5cm} e^{-\alpha y_0} e^{-\alpha y_0'} d x_0 dx_0'  dy_0 dy_0' \nonumber \\
&=&O(1) \cdot \int_0^{y}\int_0^{y} \int_{S(y_0,y_0')} 
e^{-\alpha y_0} e^{-\alpha y_0'} d x_0 dx_0'  dy_0 dy_0' \nonumber \\ 
&=&O(1) \cdot \int_0^{y}\int_0^{y} \int_0^{hI} \int_{x_0-2e^{y}}^{x+2e^{y}}
e^{-\alpha y_0} e^{-\alpha y_0'} d x_0 dx_0'  dy_0 dy_0'  \nonumber \\
&=& O(1) \int_0^{y}\int_0^{y} \int_0^{hI} e^{-\alpha y_0} e^{-\alpha y_0'} dx_0'  dy_0 dy_0' \nonumber \\
&=& O (n).
\end{eqnarray*}
Thus, we conclude that 
$$ \E{\vol (A_1)^2} \leq (\E{\vol (A_1)})^2 + O(n),$$
whereby 
$$\Var (\vol (A_1)) = O(n). $$
\end{proof}

\section{Proof of Lemma~\ref{lem:cut-out}}

Here, we return to the probability space associated with the random graph $\PBnan{}$. In particular, we 
will work with a subset of the point process $\PPna$ on $\D$, which we denote by $\PPnan$: we set 
$\PPnan = \PPna\setminus  \mathcal{D}_{\delta R}$, for some $\delta \in (0,1)$.   
In other words, $\PPnan$ is $\PPna$ but without the points inside the disc $\mathcal{D}_{\delta R}$.  
The reason for working with this process is that it is hard to bound the degrees of the points of $\PPna$ which 
may appear close to the centre of $\D$. However, we can show that the two processes coincide a.a.s. provided that $\delta$ is small enough. 
\begin{claim} \label{clm:empty_centre}
If $\delta < 1 - 1/(2\alpha)$, then a.a.s. 
$$\PPnan = \PPna.$$
\end{claim}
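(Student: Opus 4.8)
The plan is a direct first-moment computation on the Poisson point process $\PPna$. Since $\PPnan$ is obtained from $\PPna$ by deleting exactly the points lying in $\mathcal{D}_{\delta R}$, the two processes coincide precisely when $\PPna$ has no point of radius less than $\delta R$; hence it suffices to show $\Pro{\PPna \cap \mathcal{D}_{\delta R} \neq \varnothing} \to 0$. By the defining property of $\PPna$, the number of its points in $\mathcal{D}_{\delta R}$ is distributed as $\Po{n\,\kappa_{\alpha,\nu,n}(\mathcal{D}_{\delta R})}$, and since $\mathcal{D}_{\delta R}$ is a disc centred at $O$, integrating the density~\eqref{eq:pdf} gives
$$\kappa_{\alpha,\nu,n}(\mathcal{D}_{\delta R}) = \int_0^{\delta R} \alpha\frac{\sinh(\alpha r)}{\cosh(\alpha R)-1}\,dr = \frac{\cosh(\alpha\delta R)-1}{\cosh(\alpha R)-1}.$$

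Next I would estimate this expectation asymptotically. Using $\cosh(\alpha\delta R)-1 = \Theta(e^{\alpha\delta R})$ and $\cosh(\alpha R)-1 = \Theta(e^{\alpha R})$ as $R\to\infty$, we get $\kappa_{\alpha,\nu,n}(\mathcal{D}_{\delta R}) = \Theta(e^{-\alpha(1-\delta)R})$. Substituting $n = \nu e^{R/2}$, equivalently $e^{R} = (n/\nu)^2$, this yields $\E{|\PPna\cap\mathcal{D}_{\delta R}|} = n\,\kappa_{\alpha,\nu,n}(\mathcal{D}_{\delta R}) = \Theta\!\big(n^{\,1-2\alpha(1-\delta)}\big)$. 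The hypothesis $\delta < 1 - \tfrac{1}{2\alpha}$ is exactly the statement that $2\alpha(1-\delta) > 1$, so the exponent $1 - 2\alpha(1-\delta)$ is negative and $\E{|\PPna\cap\mathcal{D}_{\delta R}|} \to 0$. Markov's inequality then gives $\Pro{\PPna\cap\mathcal{D}_{\delta R}\neq\varnothing} \leq \E{|\PPna\cap\mathcal{D}_{\delta R}|} \to 0$, i.e.\ a.a.s.\ $\PPnan = \PPna$.

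There is essentially no substantive obstacle here: the claim is a routine application of the first moment method to a Poisson count. The only point requiring a little care is the bookkeeping of exponents when passing from $R$ to $n$ via $n = \nu e^{R/2}$, and checking that the stated threshold on $\delta$ is precisely the condition $1 - 2\alpha(1-\delta) < 0$ under which the expected number of central points vanishes.
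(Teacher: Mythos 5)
Your proof is correct and takes essentially the same approach as the paper: a first-moment computation of $\E{|\PPna\cap\mathcal{D}_{\delta R}|}$ via the explicit integral of $\rho_n$, followed by the observation that the exponent $1-2\alpha(1-\delta)$ is negative exactly when $\delta < 1-1/(2\alpha)$. The only cosmetic difference is that you invoke Markov's inequality explicitly, whereas the paper leaves that step implicit.
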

\begin{proof} 
This follows from a simple first moment argument. Indeed, 
$$\E{|\PPna \cap \mathcal{D}_{\delta R}|} =  n \cdot \kappa_{\alpha,\nu, n} (\mathcal{D}_{\delta R}) 
= n \cdot \frac{1}{2\pi} \int_{-\pi}^{\pi} \int_{0}^{\delta R} \rho_n(r) dr d \theta. $$
But 
$$ \int_{0}^{\delta R} \rho_n(r) dr = \int_{0}^{\delta R} \frac{\alpha \sinh (\alpha r)}{\cosh (\alpha R) -1} dr = \frac{\cosh (\alpha \delta R) -1}{\cosh (\alpha R)-1} = O(n^{-2\alpha (1-\delta)}). $$
Therefore, 
$$ \E{|\PPna \cap \mathcal{D}_{\delta R}|} = O(n^{1-2\alpha (1-\delta)}).$$ 
So, if $\delta < 1 - 1/(2\alpha)$, then the exponent is negative and this expected value is $o(1)$. 
\end{proof}
Note that $1-1/(2\alpha) <1$, as $\alpha > 1/2$. Furthermore, note that the definition of $\PPnan$ allows for both processes to be defined on the same probability space, thus being naturally coupled. 
The intensity measure of $\PPnan$ is $n \cdot \kappa_{\alpha, \nu, n} (\cdot \setminus \mathcal{D}_{\delta R})$. 
For the moment, we shall assume that $\delta < 1 - 1/(2\alpha)$, so that the conclusion of Claim~\ref{clm:empty_centre} holds. 

Now, for a point $p \in \D$ and a finite set of points $P\subset \D$, we set 
$\deg (p;P) = |B(p;R) \cap P|$.   
For $0\leq y_1 < y_2 \leq R$, let $\A_{y_1,y_2}\subset \D$ denote the annulus inside $\D$ consisting of those points in $\D$ having defect radius between $y_1$ and $y_2$. 
We set 
$$ X_{y_1,y_2} (P) =  \sum_{p \in P\cap \A_{y_1,y_2}} \deg (p;P).$$

Clearly, for any $0< y< R$ on the event $\{\PPna = \PPnan\}$ we have
$$\vol (\PPna \cap \mathcal{D}_{R-y}) = X_{y,R} (\PPnan) $$ 
and 
$$ e(\PBnan{}) = \frac{1}{2} X_{0,R}(\PPnan). $$
So, on $\{\PPna = \PPnan\}$, if $\vol (\PPna \cap \mathcal{D}_{R-y})   > \eps e (\PBnan{})$, for some $\eps>0$, then 
\begin{equation} \label{eq:transfer_to_X}
X_{y,R} (\PPnan) > \frac{\eps}{2} X_{0,R} (\PPnan). 
\end{equation}
We will give a general result on the concentration of the 
sum $X_{y,R} (\PPna)$, parametrised by $y$. 
We will show the following. 
\begin{lemma} \label{lem:X_conc}
For any fixed $y\geq 0$, we have 
$$ \frac{ X_{y,R} (\PPnan)}{\E{X_{y,R} (\PPnan)}} \to 1, $$
as $n\to \infty$ in probability.  
\end{lemma}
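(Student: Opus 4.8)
The plan is to prove Lemma~\ref{lem:X_conc} by a standard second moment argument: we show that $\E{X_{y,R}(\PPnan)} = \Theta(n)$ and that $\Var(X_{y,R}(\PPnan)) = o\bigl(\E{X_{y,R}(\PPnan)}^2\bigr)$, after which Chebyshev's inequality gives the claimed convergence in probability. In fact, as the computations in Claim~\ref{clm:var_A} suggest, I expect to prove the stronger bound $\Var(X_{y,R}(\PPnan)) = O(n)$, which since the mean is of order $n$ is more than enough. The whole argument mirrors the planar computation already carried out for $\vol(A_1)$ in Section~\ref{sec:modOfB}, but now performed directly on $\D$ rather than on $\mathcal{B}$, and with the annulus $\A_{y,R}$ in place of a box.

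First I would compute the expectation. Writing $X_{y,R}(\PPnan) = \sum_{p \in \PPnan \cap \A_{y,R}} \deg(p;\PPnan)$ and applying the (univariate) Campbell--Mecke formula~\eqref{eq:Campbell-Mecke} together with the intensity $n\cdot\kappa_{\alpha,\nu,n}(\cdot\setminus\mathcal D_{\delta R})$, we get
\[
\E{X_{y,R}(\PPnan)} = \int_{\A_{y,R}} \E{\deg(p;\PPnan\cup\{p\})}\, n\,\kappa_{\alpha,\nu,n}(dp).
\]
Now $\E{\deg(p;\PPnan\cup\{p\})} = n\,\kappa_{\alpha,\nu,n}\bigl(B(p;R)\setminus\mathcal D_{\delta R}\bigr)$, and to estimate this I would use the ball approximation~\eqref{eq:ball_approx} and Claim~\ref{eq:check_ball_vol}: for a point $p$ of defect radius $y(p)$, the expected degree is $\Theta(e^{y(p)/2})$ plus a contribution from the part of $B(p;R)$ of large radius (small defect), which is also controlled. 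Integrating $\Theta(e^{y(p)/2})$ against the radial density $\rho_n$ over the annulus $\A_{y,R}$ — equivalently, against $e^{-\alpha y(p)/2}$-type weights after the change of variables $y(p)=R-r(p)$ — and using $n = \nu e^{R/2}$ gives a quantity of order $n$, uniformly for fixed $y$; the constant depends on $y$, $\alpha$, $\nu$ but that is irrelevant since we only divide by the mean.

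For the variance I would again invoke the bivariate Campbell--Mecke formula and split the double integral over pairs $(p,p')$ into the ``far'' regime, where $B(p;R)$ and $B(p';R)$ are disjoint (using Claim~\ref{clm:disjoint_balls}: this happens once the relative angle exceeds roughly $4(1+\gamma)e^{-(1-\zeta)R/2}$), and the ``close'' regime. In the far regime the two degrees are independent, so the corresponding integral is bounded by $\E{X_{y,R}(\PPnan)}^2$ and cancels against the $-(\E X)^2$ term in the variance. In the close regime, the angular measure of the set of admissible $p'$ for a fixed $p$ is $O(e^{-(1-\zeta)R/2})$ wide, and the product of the two degrees is controlled in expectation by Cauchy--Schwarz together with the second-moment bound~\eqref{eq:check_ball_vol_sq}, namely $\E{\deg(p;\cdot)^2} = O(e^{y(p)})$; integrating $O(e^{(y(p)+y(p'))/2})$-type terms over this thin sliver, against the radial densities, yields $O(n)$. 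Summing the two regimes gives $\Var(X_{y,R}(\PPnan)) = O(n) = o(\E{X_{y,R}(\PPnan)}^2)$, and Chebyshev finishes the proof.

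The main obstacle I anticipate is bookkeeping the contribution to the degree (and to the expectation and variance integrals) coming from points of $B(p;R)$ that lie very close to the centre of $\D$, i.e.\ of small defect radius — precisely the region whose awkwardness motivated the passage to $\PPnan$ in the first place. One must check that, after deleting $\mathcal D_{\delta R}$, the expected degree $n\,\kappa_{\alpha,\nu,n}(B(p;R)\setminus\mathcal D_{\delta R})$ remains $O(e^{y(p)/2})$ uniformly — so that the ball-approximation machinery of Section~2, which is stated only for points of bounded defect, can legitimately be applied — and that the truncated ball $B(p;R)\cap\A_{(1-\zeta)R+y(p)}$ captures all but a negligible fraction of the relevant measure when $\zeta$ is taken close enough to $1$. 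Once that uniform $\Theta(e^{y(p)/2})$ estimate for truncated expected degrees is in hand, the rest is a routine repetition of the second-moment computation already performed for $\vol(A_1)$.
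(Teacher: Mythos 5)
Your overall plan (Campbell--Mecke for the mean, a far/close split for the second moment, Chebyshev) is indeed the engine behind the paper's proof, but there are two genuine problems with the sketch as written.

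\textbf{The ``far regime'' does not give independent degrees.} You invoke Claim~\ref{clm:disjoint_balls} to say that once $\theta(p,p') > 4(1+\gamma)e^{-(1-\zeta)R/2}$ the balls $B(p;R)$ and $B(p';R)$ are disjoint and hence $\deg(p)$ and $\deg(p')$ are independent. But Claim~\ref{clm:disjoint_balls} only asserts disjointness of the \emph{truncated} balls $B(p;R)\cap\A_{(1-\zeta)R+y(p)}$, i.e.\ the balls with the central region removed. The full balls $B(p;R)$ and $B(p';R)$ are \emph{never} disjoint for interesting $p,p'$: for any $q$ with $r(q)\leq\min(y(p),y(p'))$ (equivalently, with defect radius at least $R-\min(y(p),y(p'))$ — note such $q$ are \emph{near the centre} and have \emph{large} defect radius, the opposite of what you wrote in the ``main obstacle'' paragraph), $q$ lies in both balls regardless of the angle between $p$ and $p'$. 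So in the far regime $\Cov(\deg(p),\deg(p'))$ is nonzero, and your cancellation against $(\E X)^2$ simply does not happen. This is precisely why the paper does \emph{not} run the second moment argument on $X_{y,R}(\PPnan)$: it first peels off $X_{y,R}-X_{y,R/2}$ and $X_{\log R, R/2}$ and the $\deg_{>h_\zeta}$ part as lower-order in expectation, and then does the variance computation only on $X^{(\zeta)}_{y,\log R}(\PPnan)=\sum_{p\in\A_{y,\log R}}\deg_{<h_\zeta(p)}(p)$, where the degree itself is truncated so that the far-pair independence (via Claim~\ref{clm:disjoint_balls}) is genuine. Without that truncation of the degree, you would have to explicitly estimate $\iint \Cov(\deg(p),\deg(p'))\,d\mu\,d\mu$ over the far regime — which is doable (it comes out $o(n^2)$) but is an additional argument your sketch omits.

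\textbf{The claimed variance bound $O(n)$ is wrong.} Even once the decomposition is set up correctly, the close regime contributes roughly $n^2\cdot t_{\zeta,\gamma,R}\cdot(\text{radial integrals})$, and since $t_{\zeta,\gamma,R}=\Theta(e^{-(1-\zeta)R/2})=\Theta(n^{-(1-\zeta)})$ this is $\Theta(n^{1+\zeta})$, not $O(n)$; there is also a polylogarithmic factor from the radial integral of $\E{\deg^2(p)}$ when $\alpha<1$. The paper gets $\Var\bigl(X^{(\zeta)}_{y,\log R}(\PPnan)\bigr)=O(n^{1+\zeta}R^{1/2})$, which is $o(n^2)$ precisely because $\zeta<1$ and $R=\Theta(\log n)$, and that is all the lemma needs. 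Your argument is salvageable once you (i) replace $O(n)$ by $o(n^2)$, and (ii) insert the degree-truncation so the far regime is genuinely independent; at that point it becomes essentially the paper's proof.
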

Furthermore, we show that $\E{X_{y,R} (\PPnan)}$ decays exponentially in $y$. 
\begin{lemma} \label{lem:expec_exp_decay} 
For any $0 \leq y < R/4$ and any $n$ sufficiently large, we have 
$$\E{X_{y,R} (\PPnan)} \leq 2 e^{-(\alpha -1/2) y} \cdot \E{X_{0,R} (\PPnan)}. $$
\end{lemma}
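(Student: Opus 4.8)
The plan is to use the Campbell--Mecke formula~\eqref{eq:Campbell-Mecke} to turn $\E{X_{y,R}(\PPnan)}$ into a one-dimensional integral over the defect radius, obtain a sharp estimate for the expected degree of a point as a function of its defect radius, and then compare the integrals for $y$ and for $0$. Since the intensity measure of $\PPnan$ is $n\kappa_{\alpha,\nu,n}$ restricted to $\D\setminus\Dis{\delta R}$, the case $k=1$ of~\eqref{eq:Campbell-Mecke} gives
\[
\E{X_{y,R}(\PPnan)} \;=\; \int_{y}^{(1-\delta)R} D(u)\, n\rho_n(R-u)\,du ,
\]
where $D(u):=\E{\deg(p;\PPnan\cup\{p\})}=n\kappa_{\alpha,\nu,n}\bigl(B(p;R)\setminus\Dis{\delta R}\bigr)$ is the expected degree of a point $p$ of defect radius $u$ (which by rotational symmetry depends on $u$ only), and we have integrated out the angular coordinate, the radial part of the intensity projecting onto $n\rho_n(R-u)\,du$.

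The heart of the matter is the following estimate for $D$. There is a constant $c=c(\alpha,\nu)>0$ such that for every fixed $\gamma>0$,
\[
(1-\gamma)(1-o(1))\,c\,e^{u/2}\;\le\;D(u)\;\le\;(1+\gamma)(1+o(1))\,c\,e^{u/2}\qquad\text{uniformly for }0\le u\le R/2,
\]
and moreover $D(u)\le C\bigl(e^{(1/2-\alpha)R}e^{\alpha u}+e^{u/2}\bigr)$ for all admissible $u$, with $C=C(\alpha,\nu)$. To prove this, decompose $B(p;R)\setminus\Dis{\delta R}$ (for $p$ of defect radius $u$, radius $r=R-u$) into three pieces: the set of $q$ with $r(q)+r\le R$, which are automatically adjacent to $p$ and form the annulus $\Dis{u}\setminus\Dis{\delta R}$ of $n\kappa_{\alpha,\nu,n}$-mass $O(e^{(1/2-\alpha)R}e^{\alpha u})$; the \emph{aligned} part $B(p;R)\cap\A_{(1-\zeta)R+u}$; and the remainder, consisting of $q$ within distance $R$ of $p$ of defect radius exceeding $\zeta R-u$, whose $n\kappa_{\alpha,\nu,n}$-mass is at most $n\kappa_{\alpha,\nu,n}(\Dis{(1-\zeta)R+u})=O(e^{(1/2-\alpha\zeta)R}e^{\alpha u})$ (bounding the admissible angular window crudely by $\pi$). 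Using $\alpha>1/2$ we may, after shrinking $\delta$ if necessary, pick $\zeta\in\bigl(\tfrac1{4\alpha}+\tfrac12,\,1-\delta\bigr)$; for such $\zeta$ both $O(\cdot)$ terms above are $o(e^{u/2})$ uniformly for $u\le R/2$, again because $\alpha>1/2$. The aligned part is sandwiched between $\check{B}_{\zeta,-\gamma}(p)$ and $\check{B}_{\zeta,\gamma}(p)$ by~\eqref{eq:ball_approx}, and revisiting the computation in the proof of Claim~\ref{eq:check_ball_vol} while keeping the constant (the inner integral there tends to $\tfrac1{\alpha-1/2}$) gives $n\kappa_{\alpha,\nu,n}(\check{B}_{\zeta,\pm\gamma}(p))=(1\pm\gamma)(1+o(1))\,c\,e^{u/2}$ with $c=\tfrac{2\alpha\nu}{\pi(\alpha-1/2)}$; this yields the two-sided estimate. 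The global bound follows from the same decomposition (with the aligned and remainder parts merged) together with the elementary inequality $\theta_R(r,r')\le C_0\,e^{(R-r-r')/2}$ coming from the hyperbolic law of cosines.

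It remains to assemble the integral; split it at $R/2$. On $[y,R/2]$ we have $n\rho_n(R-u)=(1+o(1))\alpha n e^{-\alpha u}$, so by the two-sided estimate for $D$ and $\alpha>1/2$, uniformly for $0\le y<R/4$,
\[
\int_y^{R/2}D(u)\,n\rho_n(R-u)\,du\;\le\;(1+\gamma)(1+o(1))\,\tfrac{c\alpha}{\alpha-1/2}\,n\,e^{-(\alpha-1/2)y},
\]
and taking $y=0$ gives $\E{X_{0,R}(\PPnan)}\ge\int_0^{R/2}D(u)n\rho_n(R-u)\,du\ge(1-\gamma)(1-o(1))\tfrac{c\alpha}{\alpha-1/2}\,n$. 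Inserting the global bound for $D$ and $n\rho_n(R-u)\le 2\alpha n e^{-\alpha u}$ into the tail gives $\int_{R/2}^{(1-\delta)R}D(u)n\rho_n(R-u)\,du=O(Re^{(1-\alpha)R})+O(ne^{-(\alpha-1/2)R/2})$, which since $\alpha>1/2$ and $R=O(\log n)$ is $o\bigl(n\,e^{-(\alpha-1/2)y}\bigr)$ uniformly for $y<R/4$. Hence $\E{X_{y,R}(\PPnan)}\le(1+\gamma)(1+o(1))\tfrac{c\alpha}{\alpha-1/2}\,n\,e^{-(\alpha-1/2)y}$, and dividing by the lower bound for $\E{X_{0,R}(\PPnan)}$ and then choosing $\gamma$ small enough and $n$ large enough yields $\E{X_{y,R}(\PPnan)}\le 2\,e^{-(\alpha-1/2)y}\,\E{X_{0,R}(\PPnan)}$, as claimed.

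The main obstacle is the sharp estimate for $D(u)$: one must control the expected degree uniformly down to the regime where the other endpoint of an edge lies close to the centre of $\D$, where Lemma~\ref{lem:relAngle} is unavailable, and show that those configurations contribute only a lower-order amount --- this is precisely where $\alpha>1/2$ (and the freedom to take $\delta$ small) is used. Everything else is routine bookkeeping with exponential integrals.
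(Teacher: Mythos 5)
Your proof is correct and follows essentially the same route as the paper: a Campbell--Mecke reduction, the two-sided $\check{B}_{\zeta,\pm\gamma}$ approximation of the ball (giving expected degree $\asymp e^{u/2}$ uniformly for defect radius $u\le R/2$), a crude $o(n)$ bound on the contribution of points of radius below $R/2$, and a comparison of the resulting exponential integrals at $y$ and at $0$. The only differences are cosmetic: you handle the inner region by a pointwise global bound on $D(u)$ via the law-of-cosines estimate $\theta_R(r,r')\le C_0e^{(R-r-r')/2}$ rather than the paper's decomposition in Claim~\ref{clm:1st_approx}, and your constant should be $c=\nu\alpha/(\pi(\alpha-1/2))$ rather than $2\nu\alpha/(\pi(\alpha-1/2))$, which is immaterial since $c$ cancels in the final ratio.
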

The above two lemmas imply that a.a.s. 
$$X_{y,R} (\PPnan) \leq 3 e^{- (\alpha -1/2)y} X_{0,R} (\PPnan).$$
If we set $y = y_\eps := \frac{1}{\alpha -1/2} \cdot \log (6/\eps)$, it follows from~\eqref{eq:transfer_to_X} that 
\begin{eqnarray*} 
\mathbb{P} (e (\PBnan{>y_\eps})  > \eps e (\PBnan{})) &=&
\mathbb{P} (X_{y_\eps,R} (\PPna) > \frac{\eps}{2} X_{0,R} (\PPnan) ) \\
&=& \mathbb{P} (X_{y_\eps,R} (\PPna) > 3 e^{- (\alpha -1/2)y_\eps} X_{0,R} (\PPnan) ) = o(1). 
\end{eqnarray*}
This concludes the proof of Lemma~\ref{lem:cut-out}, assuming Lemmas~\ref{lem:X_conc} and~\ref{lem:expec_exp_decay}.

We now proceed with the proofs of these two lemmas. 
\begin{proof}[Proof of Lemma~\ref{lem:expec_exp_decay}]

We begin with an upper bound on the expected value of $X_{y,R}(\PPnan)$. 
Note that for $S < R$ we have $X_{y,S}(\PPnan) \leq X_{y,R} (\PPnan)$.
So we can bound
\begin{eqnarray} 
0 &\leq&  X_{y,R} (\PPnan) - X_{y,R/2} (\PPnan) \leq 
2 \cdot | \{p \in \PPnan \cap \mathcal{D}_{R/2}\}|^2 \nonumber \\
&& \hspace{7cm} + \sum_{p\in \PPnan \cap \A_{R/2, (1-\delta)R}} \deg (p;\PPnan \cap \A_{0,R/2}). \nonumber \\ 
& & \label{eq:diff_decomp}
\end{eqnarray}
We will show that the right-hand side is essentially sub-linear. 
\begin{claim} \label{clm:1st_approx}
 $\E{X_{y,R} (\PPnan)} -\E{X_{y,R/2} (\PPnan)} = o(n).$
\end{claim}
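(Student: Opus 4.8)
The plan is to take expectations on both sides of the decomposition~\eqref{eq:diff_decomp} and show that each of the two terms on its right-hand side has expectation $o(n)$; the claim then follows by linearity of expectation.

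For the first term, $|\PPnan\cap\mathcal{D}_{R/2}|$ is Poisson-distributed with mean $\lambda=n\int_{\delta R}^{R/2}\rho_n(r)\,dr\le n\,\tfrac{\cosh(\alpha R/2)-1}{\cosh(\alpha R)-1}=O(n\,e^{-\alpha R/2})=O(n^{1-\alpha})$, which is exactly the computation in the proof of Claim~\ref{clm:empty_centre} with $R/2$ in place of $\delta R$ (and using $e^{R/2}=n/\nu$). Hence $\E{|\PPnan\cap\mathcal{D}_{R/2}|^{2}}=\lambda+\lambda^{2}=O(n^{1-\alpha}+n^{2-2\alpha})=o(n)$, since $\alpha>1/2$ puts both exponents strictly below $1$.

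For the second term I would first record the uniform estimate $\kappa_{\alpha,\nu,n}(B(p;R))=O(e^{-r(p)/2})$ for $p\in\D$ with $r(p)$ bounded away from $0$ — equivalently, that the expected degree of a point at radius $r$ is $O(n\,e^{-r/2})$. This follows by splitting $B(p;R)$ into the points of radius $\le R-r(p)$ (which lie in $B(p;R)$ by the triangle inequality and carry $\kappa$-measure $\tfrac{\cosh(\alpha(R-r(p)))-1}{\cosh(\alpha R)-1}=\Theta(e^{-\alpha r(p)})$) and the points of radius $s\in(R-r(p),R]$, contributing $\tfrac1\pi\int_{R-r(p)}^{R}\rho_n(s)\,\theta_R(r(p),s)\,ds$, and then using the crude global bound $\theta_R(r,s)\le C_0\,e^{(R-r-s)/2}$ (which comes straight from the law-of-cosines formula for $\theta_R$ and holds whenever $1\le r,s\le R$ and $r+s\ge R$) together with $\rho_n(s)=O(e^{-\alpha(R-s)})$; the resulting integral is $O(e^{-r(p)/2})$ because $\alpha>1/2$, and the first piece is smaller since $e^{-\alpha r}\le e^{-r/2}$. (Alternatively this estimate can be quoted from~\cite{ar:Gugel}.) With this in hand, the Campbell--Mecke formula~\eqref{eq:Campbell-Mecke} applied to the intensity measure of $\PPnan$ bounds $\E{\sum_{p\in\PPnan\cap\A_{R/2,(1-\delta)R}}\deg(p;\PPnan\cap\A_{0,R/2})}$ above by $n\int_{\A_{R/2,(1-\delta)R}}n\,\kappa_{\alpha,\nu,n}(B(p;R))\,\tfrac{\rho_n(r(p))}{2\pi}\,dr\,d\theta=O(1)\cdot n^{2}\int_{\delta R}^{R/2}e^{-r/2}\rho_n(r)\,dr$; since $\rho_n(r)=O(e^{-\alpha(R-r)})$ and $\alpha>1/2$, the integral is $O(e^{-\alpha R}\cdot e^{(\alpha-1/2)R/2})=O(e^{-\alpha R/2-R/4})$, so the whole expression is $O(n^{2}e^{-\alpha R/2-R/4})=O(n^{3/2-\alpha})=o(n)$.

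The hard part is the uniform ball-volume estimate $\kappa_{\alpha,\nu,n}(B(p;R))=O(e^{-r(p)/2})$: the points $p$ entering here can have defect radius as large as $(1-\delta)R$, which is outside the range $y(p)+y(p')\le\zeta R$ where the sharp angular approximation of Lemma~\ref{lem:relAngle} applies, so one must fall back on the weaker global bound for $\theta_R$ obtained directly from the law of cosines. Everything else is a routine Poisson / Campbell--Mecke computation, and the hypothesis $\alpha>1/2$ is used precisely to force the three exponents $1-\alpha$, $2-2\alpha$ and $3/2-\alpha$ all below $1$.
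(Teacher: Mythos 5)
Your treatment of the first term of~\eqref{eq:diff_decomp} is the same as the paper's (Poisson mean $O(n^{1-\alpha})$, hence second moment $\lambda+\lambda^{2}=o(n)$ for $\alpha>1/2$; if anything your $\lambda+\lambda^{2}$ is slightly more careful than the paper's ``proportional to the square of the mean''). For the second term you genuinely diverge. The paper stays inside its own toolkit: it splits $\deg(p;\cdot)$ via~\eqref{eq:deg_decomp_upper} into the points captured by $\check{B}_{\zeta,\gamma}(p)$ (whose expected number is $\Theta(e^{(R-\varrho)/2})$, integrating to $O(n)e^{-(\alpha-1/2)R/2}=o(n)$) plus the ``overshoot'' set $B^{\uparrow}(p;\cdot)$ of points too close to the centre for Lemma~\ref{lem:relAngle} to apply, which it bounds crudely by the total mass of a central disc, giving $O(R)\,n^{2(1-\alpha\zeta)}=o(n)$ only after choosing $\zeta$ close enough to $1$. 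You instead invoke the uniform ball-volume estimate $\kappa_{\alpha,\nu,n}(B(p;R))=O(e^{-r(p)/2})$, proved by splitting off the inner disc $\mathcal{D}_{R-r(p)}$ and using the global law-of-cosines bound $\theta_R(r,s)\le C_0e^{(R-r-s)/2}$; this yields the clean exponent $n^{3/2-\alpha}=o(n)$ with no auxiliary parameter $\zeta$ to tune. Your route is correct and arguably tidier, and you correctly identify its one external ingredient: the global angular bound is not contained in Lemma~\ref{lem:relAngle} (which requires $y(p)+y(p')\le\zeta R$), so it must be derived from the hyperbolic law of cosines or quoted from~\cite{ar:Gugel}, exactly as you say. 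The trade-off is that the paper's argument, while clumsier, reuses only estimates it has already set up ($\check{B}_{\zeta,\gamma}$ and the mass of central discs), whereas yours imports a standard but unstated lemma; both are valid proofs of the claim.
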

\begin{proof}
The expected value of the first term on the right hand side of~\eqref{eq:diff_decomp} is 
\begin{eqnarray*} 
\E{ | \{p \in \PPnan \cap \mathcal{D}_{R/2}\}|} &=& 
n \cdot \kappa_{\alpha, \nu,  n} (\mathcal{D}_{R/2}) < n\cdot  \frac{\alpha}{2\pi} \int_{0}^{R/2} \int_{-\pi}^{\pi} \frac{\sinh (\alpha r)}{\cosh (\alpha R)-1} d \theta d r \\
&=& n\cdot  \frac{\cosh (\alpha R/2) - 1}{\cosh (\alpha R)-1} = O(n^{1-\alpha}) \stackrel{\alpha >1/2}{=} o(n^{1/2}).
\end{eqnarray*}
Since this random variable is Poisson-distributed, the expected value of its square is proportional to the 
square of its expected value. Thereby, 
\begin{equation} \label{eq:sum1} 
\E{| \{p \in \PPnan \cap \mathcal{D}_{R/2}\}|^2} = o(n). 
\end{equation}

We now bound the expected value of the last term in~\eqref{eq:diff_decomp}, 
using the Campbell-Mecke formula~\eqref{eq:Campbell-Mecke}:
\begin{eqnarray}
&& \E {\sum_{p\in \PPnan \cap \A_{R/2, (1-\delta)R}} \deg (p;\PPnan \cap \A_{0,R/2})} =  \nonumber \\
&& n \cdot \frac{1}{2\pi} \int_{-\pi}^{\pi} \int_{\delta R}^{R/2} \E{\deg ((\varrho, \theta); (\PPnan \cup \{(\varrho,\theta)\}) \cap \A_{0,R/2})} \rho_n (\varrho) d \varrho d \theta. \label{eq:term2}
\end{eqnarray}
For a point $p=(\varrho,\theta) \in \D$ (here $\varrho$ is the radius of $p$), we set $h_\zeta (p) :=\zeta R - R+ \varrho$. 
We will use the upper bound which is a consequence of~\eqref{eq:ball_approx}: for $\zeta \in (0,1)$ and 
$\gamma \in (0,1)$ and for $n$ sufficiently large 
\begin{equation} \label{eq:deg_decomp_upper} \deg ((\varrho, \theta); (\PPnan \cup \{(\varrho,\theta)\}) \cap \A_{0,R/2}) \leq 
| \check{B}_{\zeta,\gamma} ((\varrho,\theta)) \cap \A_{0,R/2} \cap \PPnan| +
|B^{\uparrow} ((\varrho,\theta);\PPnan)|, 
\end{equation}
where
$$ B^{\uparrow} ((\varrho,\theta);P):= \{p \in P \ : \ y(p) > h_\zeta ((\varrho, \theta)) \}.$$
Thereby, 
\begin{eqnarray*} 
\lefteqn{\E{\deg ((\varrho, \theta); (\PPnan \cup \{(\varrho,\theta)\}) \cap \A_{0,R/2})} \leq}\\
& &\hspace{2cm} \E{| \check{B}_{\zeta,\gamma} ((\varrho,\theta)) \cap \A_{0,R/2} \cap \PPnan|} + \E{|B^{\uparrow} ((\varrho,\theta);\PPnan)|}.
\end{eqnarray*}
Now, the first term on the right hand side of~\eqref{eq:deg_decomp_upper} can be bounded as follows: 
\begin{eqnarray}
\E{|\check{B}_{\zeta,\gamma} ((\varrho,\theta)) \cap \A_{0,R/2} \cap \PPnan|} &=& n \frac{1+\gamma}{2\pi}  \cdot 2
e^{-R/2 + (R-\varrho)/2} \cdot \int_{R/2}^R e^{(R-z)/2}
\frac{\alpha \sinh (\alpha z)}{\cosh(\alpha R)-1} d z \nonumber \\
&=& \Theta (1) \cdot e^{(R-\varrho)/2} \int_{R/2}^R e^{(1/2- \alpha) (R-z)} d z \nonumber \\
&=& \Theta (1) \cdot e^{(R-\varrho)/2} \int_0^{R/2} e^{(1/2 -\alpha)z} dz = \Theta (e^{(R-\varrho)/2}).  \nonumber 
\end{eqnarray}
Note that this bound is uniform over all $\varrho \in (\delta R , R/2)$.
Substituting it in~\eqref{eq:term2} we get 
\begin{eqnarray}
 \E {\sum_{p\in \PPnan \cap \A_{R/2, (1-\delta)R}}
 |\check{B}_{\zeta,\gamma} ((\varrho,\theta)) \cap \A_{0,R/2} \cap \PPnan|} &=&  O(n)  \int_{-\pi}^{\pi} \int_{\delta R}^{R/2} e^{(R-\varrho)/2} \rho_n (\varrho) d \varrho d \theta \nonumber \\ 
&=& O(n)  \int_{\delta R}^{R/2} e^{(R-\varrho)/2} e^{-\alpha (R-\varrho)} d \varrho \nonumber \\
&\stackrel{\alpha >1/2}{=}& O(n) e^{-(\alpha -1/2) R/2} = o(n). 
\end{eqnarray}

For the second term we have:
\begin{eqnarray} 
\lefteqn{\E{|B^{\uparrow} ((\varrho,\theta);\PPnan)|} = 
n \cdot \kappa_{\alpha, n} ( |\{p \ : \ y(p) > \zeta R - R+\varrho \}|)} \nonumber \\ 
&=& n\cdot  \frac{\alpha}{2\pi} \int_{0}^{2R - \zeta R - \varrho} 
\int_{-\pi}^{\pi} \frac{\sinh (\alpha r)}{\cosh (\alpha R)-1} d \theta d r \nonumber \\
&=& n\cdot  \frac{\cosh (\alpha (2R - \zeta R -\varrho)) - 1}{\cosh (\alpha R)-1} 
= O(n \cdot e^{\alpha (R(1-\zeta) -\varrho)}), \label{eq:upper_ball_mass}
\end{eqnarray}
uniformly over all $R/2 < \varrho < R-y$. 
Therefore, 
\begin{eqnarray}
\lefteqn{n \cdot \frac{1}{2\pi} 
\int_{\delta R}^{R/2} \int_{-\pi}^\pi \E{ |B^{\uparrow} ((\varrho,\theta);\PPnan) | ) } \rho_n (\varrho) 
d\theta d\varrho  =} \nonumber \\
& & O(n^2) \cdot e^{\alpha R (1-\zeta)} \cdot \int_{\delta R}^{R/2} e^{-\alpha \rho} 
\frac{\sinh (\alpha \rho)}{ \cosh (\alpha R)-1} d \varrho \nonumber \\
&\stackrel{\sinh(x) \leq e^x}{\leq}& O(n^2)\cdot  e^{\alpha R (1-\zeta)} \cdot \int_{\delta R}^{R/2}  e^{-\alpha \rho} 
\frac{e^{\alpha \rho}}{ \cosh (\alpha R)-1} d \varrho \nonumber \\
&=& O(n^2) \cdot e^{\alpha R(1-\zeta) - \alpha R} \int_{\delta R}^{R/2} d \varrho  \nonumber \\
&=& O(R) \cdot n^{2} e^{-\alpha  \zeta R} = O(R) \cdot n^{2(1-\alpha \zeta)}\stackrel{\alpha > 1/2}{=} o(n), \nonumber
\end{eqnarray}
provided that $1-\zeta$ is sufficiently small (depending on $\alpha$). 
\end{proof}

We can now consider $\E{X_{y,R/2} (\PPnan)}$. 
Applying the Campbell-Mecke identity~\eqref{eq:Campbell-Mecke} 
on the point process $\PPnan $ on $\D$ with intensity measure $n \cdot \kappa_{\alpha,\nu, n} (\cdot \setminus \mathcal{D}_{\delta R})$,
we have  
\begin{eqnarray} 
\lefteqn{\E{X_{y,R/2} (\PPnan)}=\E{\sum_{p \in \PPnan \cap \A_{y, R/2}} \deg (p; \PPnan)} =} 
\nonumber \\
& & n \cdot  \frac{1}{2\pi}
 \int_{-\pi}^\pi \int_{R/2}^{R-y} \E{\deg ((\varrho, \theta); (\PPnan \cup \{(\varrho, \theta)\}))} \rho_n (\varrho) 
d\varrho d\theta. \label{eq:expectation}
\end{eqnarray}
Now, we bound  the degree of $(\varrho, \theta)$ inside $\A_{y,R/2}$ with respect to the point process $\PPnan \cup \{(\varrho, \theta)\}$ with 
the use of Lemma~\ref{lem:relAngle}.

We apply~\eqref{eq:ball_approx} with $\zeta$ sufficiently close to 1. 
For $\gamma \in (0,1)$, and any finite subset $P\subset \D$ we bound
\begin{eqnarray*} 
|\check{B}_{\zeta,- \gamma} ((\varrho,\theta))\cap P| \leq \deg((\varrho,\theta);P)&\leq& | B^{\uparrow} ((\varrho,\theta);P)|  + |\check{B}_{\zeta, \gamma} ((\varrho,\theta))\cap P|,
\end{eqnarray*}
with 
$$ B^{\uparrow} ((\varrho,\theta);P):= \{p \in P \ : \ y(p) > h_\zeta ((\varrho, \theta)) \}.$$
Let us set
$$ X^{(\zeta,\gamma)}_{y_1,y_2}(P) = \sum_{p\in P \cap \A_{y_1,y_2}} 
 |\check{B}_{\zeta, \gamma} (p)\cap P \setminus\{p\}|.$$
For the expected value of the first term we use the calculation in~\eqref{eq:upper_ball_mass}
which holds uniformly over all $R/2 < \varrho < R-y$: 
\begin{eqnarray}
n \cdot \frac{1}{2\pi} 
\int_{R/2}^{R-y} \int_{-\pi}^\pi \E{ B^{\uparrow} ((\varrho,\theta);(\PPnan \cup \{(\varrho,\theta)\}) \cap \mathcal{D}_{R-y}) } \rho_n (\varrho) 
d\theta d\varrho =o(n) 
\label{eq:degree_up}
\end{eqnarray}
as in the proof of the previous claim,
provided that $1-\zeta$ is sufficiently small (depending on $\alpha$). 

Therefore, 
\begin{equation} \label{eq:approx_balls}
0\leq  \E{X_{r,R/2} (\PPnan)}- \E{X^{(\zeta, \gamma)}_{y,R/2} (\PPnan)} = o(n).
\end{equation}

Now, for any $\gamma \in (-1,1)$, we have 
\begin{eqnarray*}\label{eq:bCheck}
\lefteqn{\E{|\check{B}_{\zeta, \gamma} ((\varrho,\theta))\cap \PPnan| } =} \\
& & n \cdot \frac{\alpha}{2\pi} \cdot  (1+\gamma ) 
2 e^{-R/2 }\cdot e^{\frac12 (R-\varrho)} \int_{2R - \zeta R - \varrho}^{R} e^{\frac12 (R-z)}\frac{\sinh(\alpha z)}{\cosh(\alpha R) -1} d z.
\end{eqnarray*}
Note that since $\varrho > R/2$, for $n$ sufficiently large we have 
$$\left|\frac{\rho_n (z)}{e^{-\alpha (R-z)}}-1\right| =\left|\frac{1}{e^{-\alpha (R-z)}}\cdot \frac{\alpha \sinh (\alpha z)}{\cosh(\alpha R)-1}- 1 \right| < |\gamma|.$$
For real quantities $a, b, c, d$, with $c, d > 0,$ we write that $a = d(b \pm c)$ if $d(b-c) \leq a \leq d(b + c).$ So by the above inequality, the last integral is bounded, for $n$ sufficiently large, as 
\begin{eqnarray} \label{eq:sandwichBound}
\int_{2R - \zeta R - \varrho}^{R} e^{\frac12 (R-z)}\frac{\sinh (\alpha z)}{\cosh(\alpha R) -1} d z =
\frac{(1 \pm |\gamma|)}{\alpha} \cdot
\int_{2R - \zeta R - \varrho}^{R} e^{(\frac12 - \alpha) (R-z) } d z. \end{eqnarray}

By applying the fact that $\varrho > R/2$ and $1-\zeta$ is sufficiently small, hence $\zeta$ is bounded away from $1/2$, we can compute the right hand integral as follows:

\begin{eqnarray*}
\int_{2R - \zeta R - \varrho}^{R} e^{(\frac12 - \alpha) (R-z) } d z
&=&  \int_0^{\zeta R + \varrho - R} e^{(\frac12 - \alpha) z} d z  \\
&\stackrel{\alpha > 1/2}{=}&  \frac{1}{(\alpha - 1/2)} \cdot \left(1 - e^{(\frac{1}{2}-\alpha)(\zeta R + \varrho - R)}\right) \\
&=&
 \frac{1}{(\alpha - 1/2)}  ( 1 - o(1)). \end{eqnarray*}
 
Therefore by substituting this expression into (\ref{eq:sandwichBound}), and taking $n$ to be sufficiently large
for any $\varrho > R/2$
\begin{eqnarray*}
\int_{2R - \zeta R - \varrho}^{R} e^{\frac12 (R-z)}\frac{\sinh (\alpha z)}{\cosh(\alpha R) -1} d z =
\frac{(1 \pm 2|\gamma|)}{\alpha(\alpha- 1/2)} \end{eqnarray*} 

By substituting (\ref{eq:bCheck}) and recalling that $\nu = ne^{-R/2}$, and setting $C_{\alpha, \nu} = \nu / (\pi ( \alpha - 1/2)),$ it follows that uniformly for all $\varrho \in [R/2,R-y]$ and $\theta \in (-\pi,\pi]$ we have:
\begin{eqnarray*} 
\frac{\E{|\check{B}_{\zeta, \gamma} ((\varrho,\theta))\cap \PPnan| }}{e^{\frac12 (R-\varrho)}} 
&=& (1 \pm 2|\gamma|)^{2} C_{\alpha, \nu}, 
\end{eqnarray*}
Therefore, by the Campbell-Mecke formula~\eqref{eq:Campbell-Mecke} we get:
\begin{eqnarray}\label{eq:zetaGammaSandwich}
\E{X^{(\zeta, \gamma)}_{y,R/2} (\PPnan)}&=& \frac{n}{2\pi} 
\int_{R/2}^{R-y} \int_{-\pi}^\pi \E{|\check{B}_{\zeta, \gamma} ((\varrho,\theta))\cap \PPnan| } \rho_n (\varrho) 
d \theta d\varrho,  \nonumber \\
&=&  (1 \pm 2|\gamma|)^{2}  C_{\alpha, \nu} \cdot \frac{n}{2\pi} 
\int_{R/2}^{R-y} \int_{-\pi}^\pi e^{\frac12 (R -\varrho) - \alpha (R-\rho)} d \theta d\varrho.  \nonumber \end{eqnarray} 

Again, we turn our attention to the right hand integral, as $\alpha > 1/2$ and $y < R/4$ we have the following:

\begin{eqnarray}
\int_{R/2}^{R-y} \int_{-\pi}^\pi e^{\frac12 (R -\varrho) - \alpha (R-\rho)} d \theta d\varrho &=& 2\pi \int_{R/2}^{R-y} e^{(1/2 - \alpha) (R-\varrho)} d \varrho, \nonumber \\
&=& 
2 \pi  \int_{y}^{R/2} e^{(1/2 - \alpha)z} d z,  \nonumber 
 \\ 
 &=&   \frac{2 \pi}{\alpha - 1/2}e^{-(\alpha - 1/2)y }\left(1 - e^{-( \alpha - 1/2)(R/2 - y)}\right), \nonumber \\
&=& \frac{2 \pi}{\alpha - 1/2}e^{-(\alpha - 1/2)y }(1 - o(1)) \nonumber,
\end{eqnarray} 
uniformly over $y<R/4$.

Substituting the above into (\ref{eq:zetaGammaSandwich}), and taking $n$ sufficiently large and setting $C'_{\alpha, \nu} = C_{\alpha, \nu}/ (\alpha - 1/2),$ we have the following:

\begin{eqnarray*}
\E{X^{(\zeta, \gamma)}_{y,R/2} (\PPnan)}
&=& n(1 \pm 3 |\gamma|)^{2} C'_{\alpha, \nu} e^{-(\alpha - 1/2)y }.\end{eqnarray*}

So~\eqref{eq:expectation} and~\eqref{eq:approx_balls} yield, for sufficiently large $n$

\begin{equation*}
\E{X_{y,R/2} (\PPnan)} = n (1 \pm 4|\gamma|)^{2}  \cdot C'_{\alpha, \nu}e^{-(\alpha - 1/2)y }.
\end{equation*}

Combining this with Claim~\ref{clm:1st_approx} we deduce the following result: 
for $\gamma \in (-1, 1)$, and $n$ sufficiently large, we have for all $0 \leq y < R/4$, 
\begin{equation} \label{eq:total_degree_high}
n (1 - 5|\gamma|)^{2} C'_{\alpha, \nu}e^{-(\alpha - 1/2)y }  \leq \E{X_{y,R} (\PPnan)} \leq n (1 + 5|\gamma|)^{2}  C'_{\alpha, \nu}e^{-(\alpha - 1/2)y}.
\end{equation}

By applying (\ref{eq:total_degree_high}) we bound the following ratio: 
for $|\gamma|$ chosen small enough such that $(1 + 5|\gamma|)/(1 - 5|\gamma|) < \sqrt{2} $ and $n$ sufficiently large: for all $0 \leq y< R/4$,

\begin{eqnarray*}
\frac{X_{y,R} (\PPnan)}{X_{0,R} (\PPnan)} &\leq& \frac{n (1 + 5|\gamma|)^{2}  C'_{\alpha, \nu}e^{-(\alpha - 1/2)y}}{n (1 - 5|\gamma|)^{2}  C'_{\alpha, \nu}}, \\ &=& \frac{(1 + 5|\gamma|)^{2}}{(1 - 5|\gamma|)^{2}}e^{-(\alpha - 1/2)y}, \\ &\leq& 2e^{-(\alpha - 1/2)y}.
\end{eqnarray*}

\end{proof}

\begin{proof}[Proof of Lemma~\ref{lem:X_conc}]
Since 
$$0\leq \E{X_{y,R}(\PPnan)} -\E{X_{y,R/2}(\PPnan)}=o(n),$$
but $\E{X_{y,R}(\PPnan)} = \Theta (n)$ (for fixed $y>0$) to show the concentration of 
$X_{y,R}(\PPnan)$ around its expected value, it suffices to show that 
$$
 \label{eq:compl_concentration} 
\frac{X_{y,R/2} (\PPnan)}{\E{X_{y,R/2}(\PPnan)}} \to 1 
$$
as $n\to \infty$, in probability. 

We decompose this random variable as follows: 
$$ X_{y,R/2} (\PPnan) = X_{y,\log R} (\PPnan)+ X_{\log R,R/2} (\PPnan).$$
By applying the upper bound of (\ref{eq:total_degree_high}) with $y =\log R$, we deduce that 
$$\E{X_{\log R,R/2} (\PPnan)} = o(n). $$

For $p\in \D$, we set 
$$ \deg_{>h_\zeta(p)} (p;P):=| \{p' \in P \cap B(p;R) \ : \ y(p') > h_\zeta ((\varrho, \theta)) \}|$$
and 
$$ \deg_{\leq h_\zeta (p)} (p;P):= |\{p' \in P \cap B(p;R) \ : \ y(p') \leq  h_\zeta (\varrho, \theta) 
\}|.$$

$$ X_{y,\log R} (\PPnan) = \sum_{p \in \PPnan \cap \A_{y,\log R}} \deg_{>h_\zeta (p)} (p; \PPnan)
+\sum_{p \in \PPnan \cap \A_{y,\log R}} \deg_{\leq h_\zeta (p)} (p;\PPnan). $$ 
Note that $\deg_{>h_\zeta (p)} (p;P) \leq | B^{\uparrow} (p;P)|$. disc
So, by~\eqref{eq:degree_up}, the first term has 
$$ \E{\sum_{p \in \PPnan \cap \A_{y,\log R} } \deg_{>h_\zeta (p)} (p;\PPnan)} =o(n).$$ 
For any $\zeta \in (0,1)$ and any finite set $P \subset \D$, set 
$$X_{y,\log R}^{(\zeta)}(P):=\sum_{p \in P \cap \A_{y, \log R}} \deg_{\leq h_\zeta (p)} (p;P). $$
Therefore, 
$$\E{X_{y,\log R} (\PPnan)} = \E{X_{r,\log R}^{(\zeta)}(\PPnan)} + o(n).$$ 
In turn, 
$$\E{X_{y,\log R}^{(\zeta)}(\PPnan)}=\Theta (n).$$ 
too. 
Hence, to show its concentration around its expected value, it suffices to show that 
$X_{y,\log R}^{(\zeta)}(\PPnan)$ is concentrated around its expected value: as $n\to \infty$
\begin{equation} \label{eq:compl_concentration_I} 
\frac{X_{y,\log R}^{(\zeta)}(\PPnan)}{\E{X_{y,\log R}^{(\zeta)}(\PPnan)}} \to 1, 
\end{equation}
in probability. 
Since the expected value scales linearly in $n$,~\eqref{eq:compl_concentration_I} will follow if we show  that
$$ \Var \left( X_{y,\log R}^{(\zeta)}( \PPnan )\right) = o(n^2).$$

\subsection{Second-moment calculations} 
To bound the variance of $X_{y,\log R}^{(\zeta)}( \PPnan)$, 
we will use Claim~\ref{clm:disjoint_balls}: we set $t_{\zeta, \gamma, R} := 4(1+\gamma) e^{-(1-\zeta) R/2}$ and write ${\A^{2}_{y, \log R}}$ for the product  $\A_{y,\log R} \times \A_{y,\log R}.$

We apply the Campbell-Mecke formula~\eqref{eq:Campbell-Mecke} 
\begin{eqnarray*}
\lefteqn{\frac{(2\pi)^2}{n^2}\E{\left( \sum_{p \in \PPnan \cap  \A_{y,\log R}} \deg_{< h_{\zeta} (p)} (p;\PPnan)\right)^2 } =}\\
& &
\int_{  \A_{y,\log R}^2 }
\E{ \deg_{< h_{\zeta} ((\varrho,\theta))} ((\varrho, \theta)) \cdot \deg_{< h_{\zeta}((\varrho', \theta'))} 
((\varrho', \theta'))
 ; \PPnan \cup \{(\varrho, \theta) , (\varrho', \theta') \}} \times  \\
& &\hspace{2cm} \rho_n (\varrho) \rho_n (\varrho') d \theta' d \varrho' d \theta d \varrho   \\
&=& \int_{\A^{2}_{y,\log R}}
\E{ \deg_{< h_{\zeta}((\varrho, \theta))} ((\varrho,\theta))  \cdot 
\deg_{< h_{\zeta}((\varrho', \theta'))} ((\varrho', \theta')) \cdot 
\mathbf{1}_{|\theta - \theta'|_\pi \leq t_{\zeta, \gamma,R}}; \PPnan \cup \{(\varrho, \theta) , (\varrho', \theta') \}}
\times \\
& & \hspace{2cm} \rho_n (\varrho) \rho_n (\varrho') d \theta' d \varrho' d \theta d \varrho    \\
& &+ \int_{\A^{2}_{y,\log R}} 
\E{ \deg_{< h_{\zeta}((\varrho, \theta))} ((\varrho,\theta))  \cdot \deg_{< h_{\zeta}((\varrho', \theta'))} 
((\varrho', \theta')) \cdot 
\mathbf{1}_{|\theta - \theta'|_\pi > t_{\zeta, \gamma, R} }; \PPnan \cup \{(\varrho, \theta) , (\varrho', \theta') \}} \times \\ 
& &\hspace{2cm} \rho_n (\varrho) \rho_n (\varrho') d \theta' d \varrho' d \theta d \varrho .
\end{eqnarray*}
Recall that for $r>0$, we defined $\A_{r} = \A_{0,R-r}$.
To bound the second integral, let us observe that by Claim~\ref{clm:disjoint_balls}, if 
$|\theta - \theta'|_\pi > t_{\zeta, \gamma, R}$, then 
$$\left( B_R((\varrho,\theta)) \cap \A_{R-h_{\zeta}((\varrho, \theta))} \right) \cap \left( B_R((\varrho', \theta')) \cap \A_{R-h_{\zeta}((\varrho', \theta'))} \right) = \varnothing.$$
So, the random variables 
$ \deg_{< h_{\zeta}((\varrho, \theta))} ((\varrho,\theta);\PPnan\cup \{(\varrho, \theta) , (\varrho', \theta') \})$ and $\deg_{< h_{\zeta}((\varrho', \theta'))} ((\varrho',\theta');\PPnan \cup \{(\varrho, \theta) , (\varrho', \theta') \})$ are independent. Thus, we can write 
\begin{eqnarray*} 
& &\int_{\A^{2}_{y,\log R}} 
\E{ \deg_{< h_{\zeta}((\varrho, \theta))} ((\varrho,\theta))  \cdot \deg_{< h_{\zeta}((\varrho', \theta'))} 
((\varrho', \theta')) \cdot 
\mathbf{1}_{|\theta - \theta'|_\pi > t_{\zeta, \gamma, R} }; \PPnan \cup \{(\varrho, \theta) , (\varrho', \theta') \}} 
\times \\ 
& &\hspace{2cm} \rho_n (\varrho) \rho_n (\varrho') d \theta' d \varrho' d \theta d \varrho  \\
&=&
\int_{\A^{2}_{y,\log R}}
\E{ \deg_{< h_{\zeta}((\varrho, \theta))} ((\varrho,\theta));\PPnan  \cup \{(\varrho, \theta) \} } \times \\
& &\hspace{1cm} \E{\deg_{< h_{\zeta}((\varrho', \theta'))} 
((\varrho', \theta')); \PPnan \cup \{(\varrho', \theta') \}}
\mathbf{1}_{|\theta - \theta'|_\pi > t_{\zeta, \gamma, R} } \cdot  
\rho_n (\varrho) \rho_n (\varrho') d \theta' d \varrho' d \theta d \varrho \\
&\leq& \int_{\A^2_{y,\log R}} 
\E{ \deg_{< h_{\zeta}((\varrho, \theta))} ((\varrho,\theta));\PPnan  \cup \{(\varrho, \theta) \}} \times \\
& &\hspace{2cm} \cdot \E{\deg_{< h_{\zeta}((\varrho', \theta'))} 
((\varrho', \theta')); \PPnan  \cup \{(\varrho', \theta') \}}\rho_n (\varrho) \rho_n (\varrho') d \theta' d \varrho' d \theta 
d \varrho \\
&=&\left( \int_{\A_{y,\log R}}
\E{ \deg_{< h_{\zeta}((\varrho, \theta))} ((\varrho,\theta));\PPnan \cup \{(\varrho, \theta) \} } 
\cdot \rho_n (\varrho) d \varrho d \theta \right)^2  \\
&=& \frac{(2\pi)^2}{n^2}\E{\sum_{p \in \PPnan \cap \A_{y,\log R}} \deg_{< h_{\zeta} (p)} (p)}^2,
\end{eqnarray*}
by the Campbell-Mecke formula~\eqref{eq:Campbell-Mecke}. 

For the first integral, we bound the product of the degrees by the sum of their squares: 
\begin{equation} 
 \deg_{< h_{\zeta}((\varrho, \theta))} ((\varrho,\theta))  \cdot \deg_{< h_{\zeta}((\varrho', \theta'))} 
((\varrho', \theta')) \leq 
 \deg^2_{< h_{\zeta}((\varrho, \theta))} ((\varrho,\theta))  + \deg^2_{< h_{\zeta}((\varrho', \theta'))} 
((\varrho', \theta')).
\end{equation}
So, by symmetry, we bound the first integral as follows: 
\begin{eqnarray} \lefteqn{ \int_{ \A^{2}_{y,\log R}} 
\E{ \deg_{< h_{\zeta}((\varrho, \theta))} ((\varrho,\theta))  \cdot 
\deg_{< h_{\zeta}((\varrho', \theta'))} ((\varrho', \theta')) \cdot 
\mathbf{1}_{|\theta - \theta'|_\pi \leq t_{\zeta, \gamma,R}}; \PPnan \cup \{(\varrho, \theta) , (\varrho', \theta') \}}\times} 
\nonumber \\
& & \hspace{2cm} \rho_n (\varrho) \rho_n (\varrho') d \theta' d \varrho' d \theta d \varrho   \nonumber  \\ 
&\leq&  
2 \cdot  \int_{ \A^{2}_{y,\log R}} 
\E{ \deg^2_{< h_{\zeta}((\varrho, \theta))} ((\varrho,\theta);\PPnan \cup \{(\varrho, \theta) \}) } \cdot
\mathbf{1}_{|\theta - \theta'|_\pi \leq t_{\zeta, \gamma,R}} \times \nonumber \\
& & \hspace{2cm} \rho_n (\varrho) \rho_n (\varrho') d \theta' d \varrho' d \theta d \varrho   \nonumber \\
&=&4 t_{\zeta, \gamma, R} \left( \int_{\A_{y,\log R}} 
\E{ \deg^2_{< h_{\zeta}((\varrho, \theta))} ((\varrho,\theta);\PPnan \cup \{(\varrho, \theta) \})} 
\rho_n (\varrho) d \theta d \varrho \right)\times \nonumber  \\
&&\left( \int_{  \A^{2}_{y,\log R}}   \rho_n (\varrho') d \theta' d \varrho'  \right).
\label{eq:2nd_moment_term_1}
\end{eqnarray}
But by~\eqref{eq:ball_approx}, we have 
$$ \deg_{< h_{\zeta}((\varrho, \theta))} ((\varrho,\theta); \PPnan \cup \{(\varrho, \theta)\}) \leq |\PPnan \cap \check{B}_{\zeta, \gamma} (p)|.$$
So by~\eqref{eq:check_ball_vol_sq} we have 
$$ \E{ \deg^2_{< h_{\zeta}((\varrho, \theta))} ((\varrho,\theta));\PPnan \cup \{(\varrho, \theta) \} } = O(e^{R-\varrho}),
$$
uniformly over all $R-\log R < \varrho < R- y$. 
Therefore, 
\begin{eqnarray}
\lefteqn{ 
\int_{ \A_{y,\log R}} 
\E{ \deg^2_{< h_{\zeta}((\varrho, \theta))} ((\varrho,\theta));\PPnan \cup \{(\varrho, \theta) \} }\rho_n (\varrho) d \theta d \varrho  =} \nonumber \\
& & O(1) \cdot \int_{R-\log R}^{R-y}  e^{R-\varrho} 
\frac{\sinh(\alpha \varrho)}{\cosh(\alpha R)-1} d \varrho  \nonumber \\
&=& O(1) \cdot  \int_{R-\log R}^{R-y}  e^{(R-\varrho) (1-\alpha)}  d \varrho  \nonumber \\
&=& O(1) \cdot \int_y^{\log R} e^{(1-\alpha) z} d z \stackrel{\alpha > 1/2}{=} O(1) \cdot R^{1/2}. \label{eq:square_exp}
\end{eqnarray}
Furthermore, 
\begin{eqnarray}
 \int_{  \A_{y,\log R} }  \rho_n (\varrho') d \theta' d \varrho'  =
 2\pi \frac{\cosh(\alpha (R-y)) - \cosh (\alpha (R-\log R))}{\cosh(\alpha R)-1} = O(1).  \label{eq:leftover}
\end{eqnarray}
Using~\eqref{eq:square_exp} and~\eqref{eq:leftover} into~\eqref{eq:2nd_moment_term_1}, we get 
\begin{eqnarray*} 
&&\int_{ \A^{2}_{y,\log R}} 
\E{ \deg_{< h_{\zeta}((\varrho, \theta))} ((\varrho,\theta))  \cdot 
\deg_{< h_{\zeta}((\varrho', \theta'))} ((\varrho', \theta')) \cdot 
\mathbf{1}_{|\theta - \theta'|_\pi \leq t_{\zeta, \gamma,R}}; \PPnan \cup \{(\varrho, \theta) , (\varrho', \theta') \}}\times 
\nonumber  \\
& & \hspace{2cm} \rho_n (\varrho) \rho_n (\varrho') d \theta' d \varrho' d \theta d \varrho   \nonumber  \\ 
&=& O(1) \cdot t_{\zeta,\gamma,R} R^{1/2} = O(1) \cdot  e^{-(1-\zeta) R/2} R^{1/2} \nonumber \\
&=& O(1) \cdot n^{-(1-\zeta)} R^{1/2}. \label{eq:2ndmoment_term1_final}
\end{eqnarray*} 
Therefore, we obtain
\begin{eqnarray*}
\E{\left( \sum_{p \in \PPnan \cap  \A_{y,\log R} } \deg_{< h_{\zeta} (p)} (p)\right)^2 } &\leq& \E{\sum_{p \in \PPnan \cap  \A_{y,\log R} } \deg_{< h_{\zeta} (p)} (p)}^2  \\
&& \hspace{2cm}+ O(1) \cdot n^{2 - (1-\zeta)} R^{1/2}.
\end{eqnarray*}
Rearranging the above, we get
\begin{equation*} \label{eq:variance}
\Var \left( \sum_{p \in \PPnan \cap  \A_{y,\log R}} \deg_{< h_{\zeta} (p)} (p)\right) 
=O(1) \cdot n^{1+\zeta} R^{1/2} = o(n^2). 
\end{equation*}
\end{proof}

\section{Discussion}

In this paper we have considered the modularity score of the KPKVB model of the hyperbolic random graph. We have shown that for all $\alpha > 1/2$ and $\nu > 0$ we have that $\q({\Pnan}) \rightarrow 1$ as $n \rightarrow \infty$ in probability. The partition we consider is that of dividing the Poincar\'{e} disc into a constant number of equal sectors. We show that the modularity of this partition is closely related to the box partition given in $\Bnan{y}$. Following from this, we observe that for any $\varepsilon > 0$ a.a.s the modularity of $\Bnan{y}$ is at least $1 - \varepsilon$ and thus $\q (\Pnan) \to 1$,  as $n\to \infty$, in probability. 

One question raised by the last author and McDiarmid, is the order of $1-\q(G),$ also referred to as the \emph{modularity deficit} \cite{ERgraphs}. The modularity deficit quantifies how much a given partition differs from optimal modularity. While we deduce that the modularity deficit of the sector division can be made arbitrarily small, it is open to determine whether we can explicitly express the rate of convergence asymptotically. It is also to determine for a given growth rate, whether we can exhibit a partition that possess such a deficit.

A modular community structure is characterised by a vertex partition where edge density within parts is much greater than expected, while density between parts is much smaller. While a high a modularity score $( >0.3)$ can be indicative of an underlying modular community structure, a high score alone does not guarantee that such a community structure exists. This tends to occur in sparse
networks. For example, in regimes where the average degree is bounded, the Erd\H{o}s-R\'{e}ny\'{i} random graph can exhibit a high modularity score in probability, without possessing a modular community structure~\cite{ERgraphs}.  

In the case of the KPKBV model, the high modularity may be a consequence of the tree-like structure of the random graph. 
Generally, trees with sublinear maximum degree demonstrate an almost optimal modularity score; see \cite{REGgraphs}.
Here, the term ``tree-like'' does not refer to the lack of short cycles (in fact, the presence of clustering implies that there are many short cycles with high probability). 
It refers to the existence of a hierarchy on the set of vertices of the random graph, 
which resembles the natural hierarchy that a rooted tree exhibits.
Let us note that as a consequence of the negative curvature of hyperbolic space, tangential distances in the Poincar\'e  disc expand exponentially with the respect to the radial distance from the centre. Pairs of vertices near the boundary of the disc are much less likely to connect, as they must possess a much smaller relative angle for this to happen. In contrast, vertices near the centre have relatively high degree, as the balls of radius $R$ around them cover almost all of the disc. This means that the communities tend to have an underlying hierarchical structure, where the communities are formed from the mutual descendants of nodes with larger defect radii. 
Each part of the sector partition tends to capture a large proportion of one of these rooted sub-trees; therefore, this may suggest why the modularity score of the sector partition tends to one, in probability.

\bibliographystyle{plain}

%

%
%
%

\bibliography{mod}
\end{document}